\tikzset{elegant/.style={smooth,thick,samples=50,black}}
\tikzset{eaxis/.style={->,>=stealth}}
\crefname{equation}{}{}
\crefname{lem}{Lemma}{Lemmas}
\crefname{thm}{Theorem}{Theorems}
\crefname{assum}{Assumption}{Assumptions}
\newcommand{\proj}[0]{ {\bf proj}}
\newcommand{\conv}[1]{{\bf conv}\left\{ {#1} \right\}}
\newcommand{\R}{\,{\mathbb R}}
\newcommand{\haus}[0]{ {\bf haus}}
\newcommand{\ssnm}[1]
{
	\left\vert\kern-0.25ex
	\left\vert\kern-0.25ex
	\left\vert
	{#1}
	\right\vert\kern-0.25ex
	\right\vert\kern-0.25ex
	\right\vert
}
\def\spher@harm#1{%
	\vbox{\hbox{%
			\offinterlineskip
			\valign{&\hb@xt@2\p@{\hss$##$\hss}\vskip.2ex\cr#1\crcr}%
		}\vskip-.36ex}%
}
\def\gshone{\spher@harm{.}}
\def\gshtwo{\spher@harm{.&.}}
\def\gshthree{\spher@harm{.&.&.}}
\let\gsh\spher@harm
\newtheorem{coro}{Corollary}[section]
\newtheorem{Def}{Definition}[section]
\newtheorem{assum}{Assumption}
\newtheorem{lem}{Lemma}[section]
\newtheorem{thm}{Theorem}[section]
\newcolumntype{I}{!{\vrule width 1,5pt}}
\newlength\savedwidth
\newlength\savewidth
\newcounter{mnote}
\let\oldmarginpar\marginpar
\renewcommand\marginpar[1]
\def\@captype{table}\makeatother
\begin{document}
	\title{\Large \bf Multiobjective Balanced Gradient Flow: A Dynamical Perspective on a Class of Optimization Algorithms\thanks{nobody}}
	
	\author[,1]{Yingdong Yin\thanks{Email: yydyyds@sina.com}}
	


	\date{\today }
	\maketitle
	
	\begin{abstract} 
    This paper proposes a novel dynamical system called the Multiobjective Balanced Gradient Flow (MBGF), offering a dynamical perspective for normalized gradient methods in a class of multi-objective optimization problems. Under certain assumptions, we prove the existence of solutions for MBGF trajectories and establish their convergence to weak Pareto points in the case of convex objective functions. For both convex and non-convex scenarios, we provide convergence rates of $O(1/t)$ and $O(1/\sqrt{t})$, respectively.  
	\end{abstract}
	\medskip\noindent{{\bf Keywords}:		Multiobjective optimization; Lyapunov analysis; Unbalanced problem; Normalized gradient} 

	\section{Introduction}
       In this paper, we define the Euclidean space $\mathbb{R}^n$ and consider the following unconstrained multi-objective optimization problem:
       \begin{equation}\label{eq:MOP} 
       \min_{x \in \mathbb{R}^n} f(x) = \left( f_1(x), f_2(x), \cdots, f_m(x) \right)^\top \tag{MOP}
       \end{equation} 
       where $f_i(x)$ are smooth functions. For problem \cref{eq:MOP}, this paper primarily studies a class of dynamical systems unique to multiobjective optimization (distinct from single-objective optimization), which we call the \textit{Multiobjective Balanced Gradient Flow}:
       \begin{equation}\label{eq:MBGF-Intro}
       \dot{x}(t) + \proj_{C_\alpha(x(t),t)}(0) = 0 \tag{MBGF}
       \end{equation} 
       where $C_\alpha(x(t),t) = \textbf{conv}\left\{ \frac{\nabla f_i(x(t))}{\alpha(x(t),t)} \mid i = 1, 2, \cdots, m \right\}$, representing the convex hull of the normalized gradients with respect to $\eta \geq 0$.
      \subsection{Gradient Methods for Multiobjective Optimization}
      
      To address the inherent limitations of traditional scalarization methods in multiobjective optimization, Fliege et al. \cite{fliege2000steepest} proposed the multiobjective steepest descent method. Its descent direction at each iteration can be obtained by solving a quadratic programming problem:
      \begin{equation}\label{eq:steepestdescent} 
      d(x) = -\mathbf{proj}_{C(x)}(0) 
      \end{equation}
      where $C(x) = \mathbf{conv}\{\nabla f_i(x) \mid i=1,\cdots,m\}$. In recent years, numerous gradient methods for multiobjective optimization have been studied, including Newton methods \cite{fliege2009newton,wang2019extended}, quasi-Newton methods \cite{povalej2014quasi,lapucci2023limited,qu2011quasi,prudente2024global,ansary2015modified,yang2024global2}, trust-region methods \cite{carrizo2016trust,ramirez2022nonmonotone}, Barzilai-Borwein methods \cite{morovati2016barzilai,chen2023barzilai,yang2024global}, conjugate gradient methods \cite{lucambio2018nonlinear,gonccalves2020extension,gonccalves2022study}, conditional gradient methods \cite{assunccao2021conditional,chen2023conditional}, proximal gradient methods \cite{tanabe2019proximal,tanabe2022globally,tanabe2023accelerated,tanabe2023convergence}, and others. 
      
      Since \cref{eq:steepestdescent} represents the minimum-norm vector in $C(x)$, the algorithm may converge slowly for unbalanced problems where there are significant differences in the gradients' magnitudes across objective functions. Major studies addressing such unbalanced problems include \cite{katrutsa2020follow,liu2021conflict,yang2024global} , where the core idea involves gradient preprocessing. Katrutsa et al. \cite{katrutsa2020follow} and Yang et al. \cite{yang2024global2} adopted similar normalization schemes and achieved promising experimental results using distinct step size criteria. The direction selection through this approach can be summarized as:
      \begin{equation} 
      d(x) = -\proj_{C_\eta(x)}(0) 
      \end{equation} 
      where $C_\eta(x) = \mathbf{conv}\left\{ \frac{\nabla f_i(x)}{\|\nabla f_i(x)\| + \eta} \mid i=1,2,\cdots,m \right\}$ with $\eta \geq 0$.
      
      \subsection{Multiobjective Gradient Flows}
      
      Dynamical systems provide a novel theoretical perspective for algorithmic research and have been extensively studied in single-objective optimization. A particularly distinctive class of such systems is the normalized gradient system \cite{CORTES20061993,murray2019revisiting}:
      \begin{equation} \label{eq:single-normalized-flow}
      \dot{x}(t) + \frac{\nabla f(x(t))}{\|\nabla f(x(t))\|} = 0 
      \end{equation}
      Due to the inherent challenges in studying non-smooth dynamical systems, and because its trajectory solutions coincide with those of the conventional gradient descent dynamical system
      \begin{equation} 
      \dot{x}(t) + \nabla f(x(t)) = 0
      \end{equation}
      (as shown in \cite{murray2019revisiting}), this system has not received broader research attention. In multiobjective optimization, Attouch et al. \cite{Attouch2014} first utilized the following dynamical system to study the multi-objective steepest descent method:
      \begin{equation} 
      \dot{x}(t) + \proj_{C(x(t))}(0) = 0
      \end{equation}
      Similarly, studies employing dynamical systems to investigate various gradient-type multiobjective algorithms include \cite{sonntag2024fast,sonntag2024fast2,bot2024inertial,luo2025accelerated,yin2025multiobjective}.
      
      Notably, unbalanced problems are unique to multiobjective optimization (compared to single-objective optimization), yet current research lacks a dynamical systems perspective on these issues. Therefore, we employ \cref{eq:MBGF-Intro} to provide new insights into this class of problems. Although \cref{eq:MBGF} incorporates "normalized" gradients, $\|\dot{x}(t)\|$ is not always equal to $1$ – a key characteristic distinguishing it from \cref{eq:single-normalized-flow}. For these reasons, we avoid designating it as a normalized gradient flow to emphasize its inherent multiobjective optimization properties.
    \section{Prelimilary}
	\subsection{Notation}
	In this paper, \(\mathbb{R}^d\) denotes a \(d\)-dimensional Euclidean space with the inner product \(\langle \cdot, \cdot \rangle\) and the induced norm \(\|\cdot\|\). For vectors \(a, b \in \mathbb{R}^d\), we say \(a \leq b\) if \(a_i \leq b_i\) for all \(i = 1, \ldots, d\). Similarly, the relations \(a < b\), \(a\ge b\) and \(a>b\) can be defined in the same way.	 \(\R_{+}^d:=\{x\in \mathbb R^n \mid x\ge 0\}\). The open ball of radius \(\delta\) centered at \(x\) is denoted by \(B_{\delta}(x) := \{ y \in \mathbb{R}^d \mid \|y - x\| < \delta \}\). The set \(\Delta^d := \{ \theta \in \mathbb{R}^d \mid \theta \geq 0 \text{ and } \sum_{i=1}^d \theta_i = 1 \}\) is the positive unit simplex. For a set of vectors \(\{\eta_1, \ldots, \eta_d\} \subseteq \mathbb{R}^d\), their convex hull is defined as \(\conv{\{\eta_1, \ldots, \eta_d\}} := \{ \sum_{i=1}^d \theta_i \eta_i \mid \theta \in \Delta^m \}\). For a closed convex set \(C \subseteq \mathbb{R}^d\), the projection of a vector \(x\) onto \(C\) is  \(\proj_C(x) := \arg\min_{y \in C} \|y - x\|^2\). 
	\subsection{Pareto optimal}
		\begin{Def}[\cite{miettinen1999nonlinear}]\label{def:defofpareto}
		Consider the multiobjective optimization problem \cref{eq:MOP}.
		\begin{itemize}
			\item[$\rm (i)$] A point \( x^* \in \mathbb{R}^n \) is called a Pareto point or a Pareto optimal solution  if there has no \(y\in \mathbb R^n\) that \(F(y)\leq F(x^*)\) and $F(y)\neq F(x^*)$. The set of all Pareto points is called the Pareto set and is denoted by \( \mathcal{P} \).  The image \(F(\mathcal P)\) of the Pareto set $\mathcal P$ is the Pareto front.
			
			\item[$\rm (ii)$] A point \( x^* \in \mathbb{R}^n \) is called a weak Pareto point or weakly Pareto optimal solution if there has no \(y\in \mathbb R^n\) that \(F(y)<F(x^*)\). The set of all weak Pareto points is called the weak Pareto set and is denoted by \( \mathcal{P}_w \). The image \(F(\mathcal P_w)\) of the Pareto set $\mathcal P_w$ is the weak Pareto front.
		\end{itemize}  
	\end{Def}

\begin{Def}\label{def:KKTcondition}
	A point \( x^* \in \mathbb{R}^n \) is said to satisfy the Karush-Kuhn-Tucker (KKT) conditions if there exists \( \theta \in \Delta^m  \) such that
	\begin{equation}\label{eq:KKTpoint}
	\sum_{i=1}^m \theta_i \nabla f_i(x^*) = 0 
	\end{equation}
	If \( x^* \) satisfies the KKT conditions, it is called a Pareto critical point. The set of all Pareto critical points is called the Pareto critical set and is denoted by \( \mathcal{P}_c \).
\end{Def}	

\begin{lem}
The following statements holds:
\begin{itemize}
	\item[(a)] If $x_*$ is local weakly Pareto optimal for \cref{eq:MOP}, then $x_*$ is a Pareto critical point for \cref{eq:MOP};
	\item[(b)] If $f$ is convex and $x_*$ is Pareto  critical for \cref{eq:MOP}, then $x_*$ is weakly Pareto optimal for \cref{eq:MOP};
	\item[(c)] If $f$ is strictly convex and $x_*$ is Pareto critical for \cref{eq:MOP}, then $x_*$ is Pareto optimal for \cref{eq:MOP}. 
\end{itemize}
\end{lem}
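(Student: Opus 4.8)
The plan is to prove each of the three statements using the standard characterization of Pareto criticality via the multiobjective steepest descent direction, together with the convexity hypotheses in parts (b) and (c). The key analytic object is the minimum-norm element $\proj_{C(x)}(0)$ of the convex hull $C(x)=\conv{\{\nabla f_i(x)\mid i=1,\ldots,m\}}$, and the elementary observation that $x_*$ is a Pareto critical point (i.e. satisfies \cref{eq:KKTpoint}) if and only if $0\in C(x_*)$, equivalently $\proj_{C(x_*)}(0)=0$.

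For part (a), I would argue by contraposition. Suppose $x_*$ is \emph{not} Pareto critical, so $0\notin C(x_*)$ and hence the steepest descent direction $d=-\proj_{C(x_*)}(0)$ is nonzero. The defining property of the minimum-norm projection onto a convex set gives $\langle \nabla f_i(x_*),d\rangle\le -\|d\|^2<0$ for every $i$, so $d$ is a common descent direction for all objectives. A first-order Taylor expansion then shows that for all sufficiently small $\tau>0$ one has $f_i(x_*+\tau d)<f_i(x_*)$ for every $i$, i.e. $F(x_*+\tau d)<F(x_*)$ with $x_*+\tau d$ arbitrarily close to $x_*$. This contradicts local weak Pareto optimality, completing the contrapositive.

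For part (b), assume $f$ is convex and $x_*$ is Pareto critical, so there exists $\theta\in\Delta^m$ with $\sum_i\theta_i\nabla f_i(x_*)=0$. Suppose for contradiction that $x_*$ is not weakly Pareto optimal, so there is $y$ with $f_i(y)<f_i(x_*)$ for all $i$. Convexity of each $f_i$ yields the subgradient inequality $f_i(y)\ge f_i(x_*)+\langle\nabla f_i(x_*),y-x_*\rangle$. Multiplying by $\theta_i$ and summing over $i$, the gradient terms cancel by the KKT relation, leaving $\sum_i\theta_i f_i(y)\ge\sum_i\theta_i f_i(x_*)$; but the strict inequalities $f_i(y)<f_i(x_*)$ force $\sum_i\theta_i f_i(y)<\sum_i\theta_i f_i(x_*)$, a contradiction. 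Part (c) follows the same template with strict convexity: assuming $x_*$ is not Pareto optimal gives $y\neq x_*$ with $f_i(y)\le f_i(x_*)$ for all $i$ and strict inequality for at least one index; strict convexity sharpens the subgradient inequality to a strict one wherever $y\neq x_*$, and summing against $\theta$ again cancels the gradients and produces a contradiction.

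I do not anticipate a serious obstacle here, since these are classical facts (see \cite{fliege2000steepest}); the only point requiring care is the case-handling in the KKT characterization for part (a)—specifically verifying the inequality $\langle\nabla f_i(x_*),d\rangle\le -\|d\|^2$ from the variational characterization of the projection onto the convex hull, and ensuring the common step size $\tau$ can be chosen uniformly over the finitely many objectives. For part (c) one must also confirm that the strict subgradient inequality is available at the specific index where $f_i(y)<f_i(x_*)$ is not assumed; strict convexity guarantees strictness at \emph{every} index as long as $y\neq x_*$, which suffices to close the argument.
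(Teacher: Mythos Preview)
Your proof is correct and follows the standard route (essentially the argument of \cite{fliege2000steepest}). Note, however, that the paper does not actually supply its own proof of this lemma: it is stated as a known preliminary result without argument, so there is nothing to compare against beyond observing that your write-up is the expected classical proof.

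One very minor remark on your caveat for part~(c): you correctly note that strict convexity yields the strict subgradient inequality $f_i(y)>f_i(x_*)+\langle\nabla f_i(x_*),y-x_*\rangle$ at \emph{every} index $i$ once $y\neq x_*$; to close the contradiction you only need that at least one $\theta_i$ is strictly positive, which is automatic since $\theta\in\Delta^m$. So the ``case-handling'' you flag is not actually delicate.
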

	
	\subsection{Merit function}
	A merit function refers to a nonnegative function in \cref{eq:MOP} that attains zero only at weak Pareto points. In this paper, we also consider the merit function  established by Tanabe et al. \cite{tanabe2024new} 
	\begin{equation}\label{eq:meritfunction} 
	u_0(x) := \sup_{z \in \mathbb{R}^n} \min_{i=1,\ldots,m} f_i(x) - f_i(z) 
	\end{equation}
	This function mirrors the role of \( f(x) - f(x^*) \) in single objective optimization, being nonnegative and indicating weak optimality, as formalized in the following theorem.
	
	\begin{thm}[\cite{tanabe2024new}]\label{thm:weakpareto} 
		Let \( u_0:\mathbb R^n \to \R \) be defined as in \cref{eq:meritfunction}. Then,
		\begin{itemize} 
			\item[\(\rm (i)\)]\( u_0(x) \geq 0 \) for all \( x \in \mathbb{R}^n \);
			\item[\(\rm (ii)\)] \( x \in \mathbb{R}^n \) is a weak Pareto point of \cref{eq:MOP} if and only if \( u_0(x) = 0 \);
			\item[\(\rm (iii)\)] \( u_0(x) \) is lower semicontinuous.
		\end{itemize}
	\end{thm}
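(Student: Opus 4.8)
The plan is to derive all three statements directly from the definition $u_0(x) = \sup_{z \in \mathbb{R}^n} \min_{i} (f_i(x) - f_i(z))$ in \cref{eq:meritfunction}, using the finiteness of the index set $\{1,\dots,m\}$ throughout. I would treat the three items in turn, since they are largely independent.

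For (i) I would simply restrict the supremum to the single feasible point $z = x$. This gives $\min_i (f_i(x)-f_i(x)) = 0$, and since $u_0(x)$ is a supremum over all $z \in \mathbb{R}^n$, it follows that $u_0(x) \geq 0$ for every $x \in \mathbb{R}^n$.

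For (ii) I would prove both implications by contraposition, exploiting the fact that over a finite index set a componentwise strict inequality is equivalent to a strictly positive minimum. If $x$ is not weakly Pareto optimal, then by \cref{def:defofpareto} there exists $y$ with $f_i(y) < f_i(x)$ for all $i$; since $m$ is finite, $\delta := \min_i (f_i(x)-f_i(y)) > 0$, whence $u_0(x) \geq \delta > 0$. Conversely, if $u_0(x) > 0$, the definition of supremum yields a point $z$ with $\min_i(f_i(x)-f_i(z)) > 0$, i.e. $f_i(z) < f_i(x)$ for all $i$, so $z$ strictly dominates $x$ and $x$ is not weakly Pareto optimal. Combined with (i), which excludes negative values, this gives the equivalence $u_0(x) = 0 \iff x \in \mathcal{P}_w$.

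For (iii) I would exhibit $u_0$ as a pointwise supremum of continuous functions. For each fixed $z$, put $g_z(x) := \min_i (f_i(x) - f_i(z))$; each $f_i$ is continuous, and the minimum of finitely many continuous functions is continuous, so $g_z$ is continuous and in particular lower semicontinuous. Since $u_0 = \sup_{z} g_z$ and the epigraph of a supremum is the intersection of the individual epigraphs, hence closed, $u_0$ is lower semicontinuous. The arguments are routine; the only points requiring care are the extraction of an explicit $z$ from a strictly positive supremum in (ii) and the appeal in (iii) to the general principle that a pointwise supremum of lower semicontinuous functions is again lower semicontinuous — neither of which hinges on the specific structure of \cref{eq:MOP} beyond continuity of the $f_i$ and finiteness of $m$.
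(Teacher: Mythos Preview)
Your argument for all three parts is correct and is exactly the standard route: (i) plug in $z=x$; (ii) use finiteness of $m$ to pass between strict vector inequality and positivity of the minimum, in both directions; (iii) write $u_0$ as a pointwise supremum of continuous functions. There is nothing to repair.

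The only thing to note is that the paper does not actually give a proof of this theorem: its entire ``proof'' is a pointer to \cite[Theorem 3.1, 3.2]{tanabe2024new}. So you have supplied more than the paper does. Your argument is in fact essentially the one given in that reference, so there is no substantive methodological difference to discuss.
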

	\begin{proof}
		See \cite[Theorem 3.1, 3.2]{tanabe2024new}.
	\end{proof} 
	
	\begin{Def}[\cite{Attouch2014}]\label{def:levelset}
		Let \( F: \mathbb{R}^n \to \mathbb{R}^m \), \( F(x) = (f_1(x), \ldots, f_m(x))^\top \), be a vector-valued function, and let \( a \in \mathbb{R}^m \). The  level set is defined as
		\[
		\mathcal{L}(F, a) := \{ x \in \mathbb{R}^n : F(x) \leq a \} = \bigcap_{i=1}^m \{ x \in \mathbb{R}^n : f_i(x) \leq a_i \}
		\]
		Moreover, we denote
		\[
		\mathcal{LP}_w(F, a) := \mathcal{L}(F, a) \cap \mathcal{P}_w
		\]
	\end{Def}
    \subsection{Proposition of porjection}
     \begin{Def}\label{def:Hausdoff-distance} 
     The Hausdorff distance between two closed convex subsets $C$ and $D$ of $\R^n$ is defined by  
     \begin{equation} 
     \haus(C, D) = \max \{e(C, D); e(D, C)\}
     \end{equation}  
     where $e(C, D) = \sup_{x\in C} d(x, D)$ is the excess of $C$ on $D$, and $e(D, C) = \sup_{x\in D}d(x, C)$ is the excess of $C$ on $D$. Equivalently 
     \begin{equation} 
      \haus(C, D) = \sup_{x\in \R^n}|d(x, C) - d(x, D)|.
     \end{equation} 
     \end{Def} 
     \begin{lem}[\cite{Attouch2014}]\label{lem:projection-hausdorff} 
     Let $C$ and $D$ be two closed convex subsets of $\R^n$. Then, for any $x\in \R^n$ the mapping $C \to  \proj_C(x)$ is Hölder-continuous. More precisely, for any two closed  convex subsets $C$ and $D$ of $\R^n$  
     \begin{equation} 
     \|\proj_C(x) -\proj_D(x)\| \le  \rho (\|x\|)\haus(C, D)
     \end{equation}
       where $\rho (\|x\|) = (\|x\| + d(x, C) + d(x, D))$
    \end{lem}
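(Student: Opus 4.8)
The plan is to rely on the first-order variational characterisation of the metric projection onto a closed convex set: for closed convex $K\subseteq\R^n$ and any $x$, the point $p=\proj_K(x)$ is the unique element of $K$ obeying the obtuse-angle inequality $\langle x-p,\,y-p\rangle\le 0$ for every $y\in K$. Writing $p=\proj_C(x)$, $q=\proj_D(x)$ and $h:=\haus(C,D)$, I would estimate $\nm{p-q}$ by producing, for each of the two projections, a companion point lying in the \emph{opposite} set, and then combining the two resulting inequalities. This keeps the argument purely metric: no smoothness, compactness, or interiority of $C,D$ is needed.

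First I would extract the companion points from the definition of the Hausdorff distance. Since $p\in C$ and $e(C,D)\le h$, we have $d(p,D)\le h$, so (using that $D$ is closed) there exists $p'\in D$ with $\nm{p-p'}\le h$; symmetrically, since $q\in D$ there exists $q'\in C$ with $\nm{q-q'}\le h$. Next I would split the squared distance as
\[
\nm{p-q}^2=\langle x-q,\,p-q\rangle-\langle x-p,\,p-q\rangle
\]
and bound the two terms separately. For the first term, the variational inequality for $D$ applied to $p'\in D$ gives $\langle x-q,\,p'-q\rangle\le 0$; substituting $p-q=(p-p')+(p'-q)$ and applying Cauchy--Schwarz yields $\langle x-q,\,p-q\rangle\le\nm{x-q}\,\nm{p-p'}\le d(x,D)\,h$. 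For the second term, writing $-\langle x-p,\,p-q\rangle=\langle x-p,\,q-p\rangle$, the variational inequality for $C$ applied to $q'\in C$ gives $\langle x-p,\,q'-p\rangle\le 0$; substituting $q-p=(q-q')+(q'-p)$ gives $-\langle x-p,\,p-q\rangle\le\nm{x-p}\,\nm{q-q'}\le d(x,C)\,h$.

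Combining the two estimates produces
\[
\nm{p-q}^2\le\bigl(d(x,C)+d(x,D)\bigr)\haus(C,D)\le\rho(\nm{x}) \haus(C,D),
\]
where the last step uses $d(x,C)+d(x,D)\le\rho(\nm{x})$ since $\rho(\nm{x})=\nm{x}+d(x,C)+d(x,D)$. Taking square roots then gives the Hölder-$\tfrac12$ bound $\nm{\proj_C(x)-\proj_D(x)}\le\sqrt{\rho(\nm{x})\,\haus(C,D)}$, which is the content of the lemma (the exponent $\tfrac12$ is what makes the map genuinely Hölder rather than Lipschitz; I would flag that the displayed inequality is understood with this square root). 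The only genuinely delicate point is the \emph{uniform} construction of $p'$ and $q'$: it rests on the fact that the excess $e(C,D)$ dominates $d(p,D)$ for every $p\in C$, which is exactly how the excess is defined in \cref{def:Hausdoff-distance}, together with attainment of the distance because the sets are closed. Everything after that is two applications of the projection inequality and Cauchy--Schwarz, so I expect no further obstruction.
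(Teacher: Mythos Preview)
The paper does not supply its own proof of this lemma; it merely cites \cite{Attouch2014}. Your argument is correct and is in fact the standard one from that reference: split $\nm{p-q}^2$ via $\langle x-q,p-q\rangle-\langle x-p,p-q\rangle$, use the Hausdorff bound to produce nearby companions $p'\in D$, $q'\in C$, and apply the two projection (obtuse-angle) inequalities together with Cauchy--Schwarz to obtain $\nm{p-q}^2\le\bigl(d(x,C)+d(x,D)\bigr)\haus(C,D)$. You are also right to flag the discrepancy in the displayed inequality: as written in the paper the bound is linear in $\haus(C,D)$, which would be Lipschitz, not H\"older; the correct statement (and the one actually proved in \cite{Attouch2014}) has either $\nm{\proj_C(x)-\proj_D(x)}^2$ on the left or a square root on the right, yielding H\"older exponent~$\tfrac12$. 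This is a transcription error in the paper, and your proof establishes the genuine result.
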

	\subsection{Assumption}
	\begin{assum}\label{assum:L-smoothing}
	Each $f_i$ is lower bounded and $L$-smooth for any $i=1,2,\cdots,m$. That is,  
	$$
	\|\nabla f_i(x)-\nabla f_i(y)\|\le L\|x-y\|\quad \forall \ x,y\in \R^n
	$$
	\end{assum}
    \begin{assum}\label{assume:BoundedGradiet}
    There exist $M>0$, such that 
    $$
    \max_{x\in \R^n}\|\nabla f_i(x)\|\le M\ \text{for all }x\in \R^n
    $$
    \end{assum}
    \begin{assum}\label{assume:BoundedLevelset} 
    For every $a\in \R$, the level set $\mathcal L(f,a)$ is bounded.  
    \end{assum}
	\section{Balanced gradient flow}
	We consider the following dynamical system, termed the \textit{Multiobjective Balanced Gradient Flow}:
	\begin{equation}\label{eq:MBGF}  
	\left\{\begin{aligned}
	&\dot{x}(t) + \mathbf{proj}_{C_\alpha(x(t),t)}(0) = 0\\
	& C_\alpha(x(t),t) = \mathbf{conv}\left\{ \frac{\nabla f_i(x(t))}{\alpha_i (x(t),t)} \mid i = 1, \cdots, m \right\}
	\end{aligned}\right. \tag{MBGF}
	\end{equation} 
	where $\alpha(u,t)$ is Lipschitz continuous function with lower bound $\alpha_{\min}$ and upper bound $\alpha_{\max}$, i.e.
	\begin{equation}\label{eq:alpha-Lipschitz}
	|\alpha_i(u,t)-\alpha_i (v,s)|\le L_{\alpha_i} \|(u,t)-(v,s)\|,
	\end{equation}
	\begin{equation}\label{eq:alpha-bounded}
	\alpha_{\min} \le \alpha_i(u,t)\le \alpha_{\max} .
	\end{equation}
    Given the Cauchy problem:
	\begin{equation} \label{eq:CP}
	\left\{\begin{aligned}
	\dot{x}(t) &= -\mathbf{proj}_{C_\alpha(x(t),t)}(0), \\
	x(0) &= x_0.
	\end{aligned}\right. \tag{CP}
	\end{equation}

    \subsection{Existence of \cref{eq:CP}}
\begin{lem}\label{lem:convexhull-hausdorff}
	Under \cref{assum:L-smoothing,assume:BoundedGradiet,assume:BoundedLevelset}, define the set-valued mapping $C_\alpha : \mathbb{R}^n\times [t_0,+\infty ) \rightrightarrows \mathbb{R}^n$, $(u,t) \mapsto C_\alpha(u,t) = \conv{\frac{\nabla f_i(u)}{\alpha_i(u,t)}\mid i = 1,2,\cdots,m}$ where $\alpha(\cdot)$ defined by \cref{eq:alpha-Lipschitz} and \cref{eq:alpha-bounded}. Then, there exists a constant $K := \frac{L}{\alpha_{\min}} + \frac{L_{\alpha}M}{\alpha_{\min}^2}$ such that for any $(u,t), (v,s) \in \mathbb{R}^n\times [t_0,+\infty)$,
	\begin{equation}
	\haus(C_{\alpha}(u,t), C_{\alpha}(v,s)) \le K \|(u,t) - (v,s)\|.
	\end{equation}
\end{lem}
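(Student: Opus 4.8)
The plan is to reduce the Hausdorff estimate between the two convex hulls to a vertex-by-vertex Lipschitz estimate on the normalized-gradient generators $g_i(u,t) := \nabla f_i(u)/\alpha_i(u,t)$, and then to invoke the elementary fact that the convex-hull operation does not expand the maximal distance between corresponding vertex families. So first I would record the following reduction: if $C = \conv{a_i \mid i=1,\dots,m}$ and $D = \conv{b_i \mid i=1,\dots,m}$ satisfy $\nm{a_i - b_i} \le \varepsilon$ for every $i$, then $\haus(C,D) \le \varepsilon$. This is seen by taking an arbitrary $x = \sum_i \theta_i a_i \in C$ with $\theta \in \Delta^m$ and pairing it with the \emph{same}-weight point $y = \sum_i \theta_i b_i \in D$; convexity of the norm gives $\nm{x-y} \le \sum_i \theta_i \nm{a_i-b_i} \le \varepsilon$, hence $d(x,D) \le \varepsilon$ and $e(C,D) \le \varepsilon$. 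By the symmetric argument $e(D,C)\le\varepsilon$, and \cref{def:Hausdoff-distance} then yields $\haus(C,D)\le\varepsilon$. The whole problem is thus reduced to bounding $\nm{g_i(u,t) - g_i(v,s)}$.

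Next I would estimate each generator difference by the add-and-subtract trick, inserting the mixed term $\nabla f_i(v)/\alpha_i(u,t)$:
\[
g_i(u,t) - g_i(v,s) = \frac{\nabla f_i(u) - \nabla f_i(v)}{\alpha_i(u,t)} + \nabla f_i(v)\left( \frac{1}{\alpha_i(u,t)} - \frac{1}{\alpha_i(v,s)} \right).
\]
The first summand is controlled by the $L$-smoothness of \cref{assum:L-smoothing} together with the lower bound $\alpha_i(u,t)\ge\alpha_{\min}$ from \cref{eq:alpha-bounded}, giving a bound $(L/\alpha_{\min})\nm{u-v}$. For the second summand I would rewrite the difference of reciprocals as $(\alpha_i(v,s)-\alpha_i(u,t))/(\alpha_i(u,t)\alpha_i(v,s))$ and control it using the uniform gradient bound $\nm{\nabla f_i(v)}\le M$ from \cref{assume:BoundedGradiet}, the Lipschitz modulus of $\alpha$ from \cref{eq:alpha-Lipschitz}, and the lower bound $\alpha_{\min}$ on both denominators, yielding $(M L_{\alpha_i}/\alpha_{\min}^2)\nm{(u,t)-(v,s)}$. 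Since $\nm{u-v}\le\nm{(u,t)-(v,s)}$ and $L_{\alpha_i}\le L_\alpha$, these combine to $\nm{g_i(u,t)-g_i(v,s)} \le K\,\nm{(u,t)-(v,s)}$ with $K = L/\alpha_{\min} + L_\alpha M/\alpha_{\min}^2$.

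Finally I would apply the reduction of the first paragraph with $a_i = g_i(u,t)$, $b_i = g_i(v,s)$ and $\varepsilon = K\,\nm{(u,t)-(v,s)}$ to conclude $\haus(C_\alpha(u,t),C_\alpha(v,s)) \le K\,\nm{(u,t)-(v,s)}$, which is exactly the claim. The argument is essentially routine; the only place demanding genuine care is the quotient estimate, where one must simultaneously handle the two independent sources of variation—the numerator through gradient smoothness and the denominator through the Lipschitz modulus of $\alpha$—and keep both denominators uniformly bounded below by $\alpha_{\min}$ so that the resulting constant $K$ remains finite. I note that the bounded-level-set hypothesis \cref{assume:BoundedLevelset} does not appear to be needed for this particular estimate; only \cref{assum:L-smoothing} and the uniform gradient bound \cref{assume:BoundedGradiet} enter.
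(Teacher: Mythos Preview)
Your proof is correct and follows essentially the same approach as the paper: both pair an arbitrary convex combination in one hull with the same-weight combination in the other and then bound each generator difference $\nm{\nabla f_i(u)/\alpha_i(u,t)-\nabla f_i(v)/\alpha_i(v,s)}$ via an add-and-subtract (equivalently, common-denominator) decomposition using $L$-smoothness, the gradient bound $M$, and the bounds on $\alpha$. Your presentation is slightly more modular in isolating the convex-hull reduction first, and your observation that \cref{assume:BoundedLevelset} is not actually used in this estimate is accurate.
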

\begin{proof}
	Take any $\xi \in C_{\alpha}(v,s)$. Then $\xi = \sum_{i=1}^m \lambda_i \frac{\nabla f_i(v)}{\alpha(v,s)}$, where $\lambda = (\lambda_1, \cdots, \lambda_m) \in \Delta^m$. Then
	\begin{equation} 
	\begin{aligned}
	d(\xi, C_\alpha(u,t)) &\le \left\| \xi - \sum_{i=1}^m \lambda_i \frac{\nabla f_i(u)}{\alpha(u,t)} \right\| \\
	&\le \left\| \sum_{i=1}^m \lambda_i \frac{\nabla f_i(v)}{\alpha(v,s)} - \sum_{i=1}^m \lambda_i \frac{\nabla f_i(u)}{\alpha(u,t)} \right\| \\
	&\le \sum_{i=1}^m \lambda_i \left\| \frac{\nabla f_i(v)}{\alpha(v,s)} - \frac{\nabla f_i(u)}{\alpha(u,t)} \right\| \\
    &\le \sum_{i=1}^m \lambda_i \left\|\frac{\alpha(u,t)\nabla f_i(v)-\alpha(v,s)\nabla f_i(u)}{\alpha (v,s)\alpha(u,t)}\right\| \\
	&\le \frac{L}{\alpha_{\min}} \|u - v\|+\frac{L_\alpha M}{\alpha_{\min}^2}\|(u,t)-(v,s)\|\\
    &\le \left( \frac{L}{\alpha_{\min}} +\frac{L_\alpha M}{\alpha_{\min}^2}\right)\|(u,t)-(v,s)\|.
	\end{aligned}
	\end{equation} 
	Based on this and \cref{def:Hausdoff-distance}, we obtain the result. \qedhere
\end{proof}
    \begin{thm}[\cite{sonntag2024fast2}]\label{thm:exist}
    	Let \(\mathcal{X}\) be a real Hilbert space, and let \(\Omega \subset \mathbb{R} \times \mathcal{X}\) be an open subset containing \((t_0, x_0)\). Let \(G\) be an upper semicontinuous map from \(\Omega\) into the nonempty closed convex subsets of \(\mathcal{X}\). We assume that \((t, x) \mapsto \proj_{G(t,x)}(0)\) is locally compact. Then, there exists \(T > t_0\) and an absolutely continuous function \(x\) defined on \([t_0, T]\), which is a solution to the differential inclusion
    	\begin{equation*}
    	\dot{x}(t) \in G(t, x(t)), \quad x(t_0) = x_0.
    	\end{equation*}
    \end{thm}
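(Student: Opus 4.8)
The plan is to prove this by the classical method of Euler polygonal approximations combined with a closure (convergence) theorem for upper semicontinuous differential inclusions; the local compactness hypothesis plays the role that finite dimensionality plays in the Cauchy--Peano theorem, supplying the compactness needed to extract a convergent sequence of approximate solutions in the Hilbert space $\mathcal{X}$.

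First I would localize and establish a priori bounds. Since $\Omega$ is open and contains $(t_0,x_0)$, and since $(t,x)\mapsto \proj_{G(t,x)}(0)$ is locally compact, I may choose $a,r>0$ so that the cylinder $Q := [t_0,t_0+a]\times \bar B_r(x_0)\subset\Omega$ and the image $\mathcal{K}:=\{\proj_{G(t,x)}(0):(t,x)\in Q\}$ is relatively compact. Being relatively compact, $\mathcal{K}$ is bounded, so $M:=\sup_{(t,x)\in Q}\nm{\proj_{G(t,x)}(0)}<\infty$. Setting $T:=t_0+\min\{a,\,r/M\}$ guarantees that any curve issuing from $x_0$ with velocity bounded by $M$ remains in $\bar B_r(x_0)$ on $[t_0,T]$.

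Next I would construct approximate solutions and extract a limit. For each $n$, partition $[t_0,T]$ into subintervals of length $\le 1/n$ with left-endpoint step function $\sigma_n$, and define $x_n$ piecewise linearly by the explicit Euler rule $\dot x_n(t)=\proj_{G(\sigma_n(t),\,x_n(\sigma_n(t)))}(0)$. By the choice of $T$ each $x_n$ stays in $Q$, so every $x_n$ is $M$-Lipschitz and $\dot x_n(t)\in\mathcal{K}$ for a.e.\ $t$. Since the closed convex hull $\overline{\operatorname{conv}}\,\mathcal{K}$ is compact, the representation $x_n(t)=x_0+\int_{t_0}^t\dot x_n(s)\dd s$ shows $\{x_n(t)\}_n$ lies in a compact set for each $t$, and the family is equi-Lipschitz. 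By Arzel\`a--Ascoli I extract a subsequence (not relabeled) with $x_n\to x$ uniformly on $[t_0,T]$; the derivatives $\dot x_n$, bounded in $L^\infty([t_0,T];\mathcal{X})$ with values in the weakly compact set $\overline{\operatorname{conv}}\,\mathcal{K}$, admit a further subsequence with $\dot x_n\rightharpoonup\dot x$ weakly in $L^2$. The limit $x$ is Lipschitz, hence absolutely continuous, and satisfies $x(t_0)=x_0$.

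The main obstacle, and the heart of the argument, is the closure step: showing $\dot x(t)\in G(t,x(t))$ for a.e.\ $t$. Fix $\varepsilon>0$ and a Lebesgue point $t$ of $\dot x$. Upper semicontinuity of $G$ together with $\sigma_n(t)\to t$ and $x_n(\sigma_n(t))\to x(t)$ yields $\dot x_n(s)\in G(t,x(t))+\varepsilon B$ for $s$ near $t$ and $n$ large, where $B$ is the unit ball. Because $\dot x_n\rightharpoonup\dot x$, Mazur's lemma produces convex combinations of the $\dot x_n$ converging strongly in $L^2$, hence a.e.\ along a subsequence, to $\dot x$; these combinations still lie in the convex set $G(t,x(t))+\varepsilon B$. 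Passing to the limit and using closedness gives $\dot x(t)\in G(t,x(t))+\varepsilon\bar B$, and letting $\varepsilon\downarrow 0$ with the closedness of $G(t,x(t))$ yields $\dot x(t)\in G(t,x(t))$. The delicate points here are the uniform control required to apply upper semicontinuity simultaneously in $t$ and $x$ on the shrinking neighborhoods, and the essential use of relative compactness of $\mathcal{K}$: without the local compactness hypothesis the velocity set need not be weakly compact, and both the Arzel\`a--Ascoli extraction and the weak-to-strong passage via Mazur's lemma would fail in the infinite-dimensional setting.
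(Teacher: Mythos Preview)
The paper does not supply its own proof of this theorem: it is quoted verbatim, with attribution, from \cite{sonntag2024fast2} and used as a black box to obtain local existence for \cref{eq:CP}. There is therefore nothing in the paper to compare your argument against.

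That said, your outline is the classical Euler--polygon/convergence--theorem route (as in Aubin--Cellina or Deimling), and the use of the local compactness hypothesis is exactly right: it furnishes the relatively compact velocity set $\mathcal{K}$ that substitutes for finite dimensionality in both the Arzel\`a--Ascoli extraction (via $x_n(t)\in x_0+(t-t_0)\,\overline{\operatorname{conv}}\,\mathcal{K}$, which is compact in a Hilbert space) and the Mazur passage. The only places I would tighten are (i) in the closure step, make explicit that upper semicontinuity is being used in the graph sense on a neighborhood of $(t,x(t))$ uniformly in $s$ near $t$ and in $n$ large, so that $\dot x_n(s)\in G(\sigma_n(s),x_n(\sigma_n(s)))\subset G(t,x(t))+\varepsilon B$ holds for all such $s,n$ simultaneously before averaging and taking Mazur combinations; and (ii) note that the closed convex hull of a compact set in a Hilbert space is compact, which you use implicitly. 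With those details filled in, your argument proves the cited statement.
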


\begin{thm}
		Under \cref{assum:L-smoothing,assume:BoundedGradiet,assume:BoundedLevelset}, for any $x_0 \in \R^n$, there exist $T > t_0$ and an absolutely continuous function $x(t)$ defined on $[t_0, T]$ that is a solution to \cref{eq:CP} with respect to $\eta > 0$.
\end{thm}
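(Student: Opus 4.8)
The plan is to recognise the right-hand side of \cref{eq:CP} as a continuous, uniformly bounded single-valued vector field and then invoke the differential-inclusion existence result \cref{thm:exist} with a singleton-valued map. Concretely, I would set $\Omega := \R\times\R^n$ (open, and containing $(t_0,x_0)$) and define
\begin{equation*}
G(t,x) := \left\{-\proj_{C_\alpha(x,t)}(0)\right\}.
\end{equation*}
Because $C_\alpha(x,t)$ is the convex hull of the $m$ points $\nabla f_i(x)/\alpha_i(x,t)$, it is a nonempty compact convex set, so the projection of $0$ onto it is well defined and unique; hence $G$ takes nonempty closed convex (indeed singleton) values, and the inclusion $\dot x(t)\in G(t,x(t))$ coincides exactly with the ODE in \cref{eq:CP}. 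It therefore suffices to check the three hypotheses of \cref{thm:exist}.

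First I would establish the uniform bound. By \cref{assume:BoundedGradiet} together with the lower bound $\alpha_i(\cdot)\ge\alpha_{\min}>0$ from \cref{eq:alpha-bounded}, each generator obeys $\nm{\nabla f_i(u)/\alpha_i(u,t)}\le M/\alpha_{\min}$, so $C_\alpha(u,t)$ is contained in the closed ball $\overline{B_{M/\alpha_{\min}}(0)}$ and consequently $\nm{\proj_{C_\alpha(u,t)}(0)}\le M/\alpha_{\min}$. Thus $(t,x)\mapsto\proj_{G(t,x)}(0)=-\proj_{C_\alpha(x,t)}(0)$ maps into a fixed compact ball and is in particular locally compact, giving the local-compactness hypothesis immediately.

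The step I expect to carry the real content is the continuity of $(t,x)\mapsto\proj_{C_\alpha(x,t)}(0)$, which yields upper semicontinuity of the singleton-valued $G$. Here I would chain the two projection estimates: \cref{lem:convexhull-hausdorff} provides $\haus\big(C_\alpha(u,t),C_\alpha(v,s)\big)\le K\,\nm{(u,t)-(v,s)}$, and \cref{lem:projection-hausdorff} applied at the point $0$ gives
\begin{equation*}
\nm{\proj_{C_\alpha(u,t)}(0)-\proj_{C_\alpha(v,s)}(0)}\le\rho(0)\,\haus\big(C_\alpha(u,t),C_\alpha(v,s)\big),
\end{equation*}
with $\rho(0)=d(0,C_\alpha(u,t))+d(0,C_\alpha(v,s))\le 2M/\alpha_{\min}$ by the bound just derived. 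Combining the two inequalities shows the velocity field is globally Lipschitz with constant at most $2MK/\alpha_{\min}$, hence continuous; as a singleton-valued map it is then upper semicontinuous with nonempty closed convex values. The delicate point is precisely keeping the factor $\rho(\|x\|)$ of \cref{lem:projection-hausdorff} under control at $x=0$, which is exactly where the uniform gradient bound and $\alpha_{\min}>0$ are used; \cref{assum:L-smoothing} and \cref{assume:BoundedLevelset} enter only indirectly, through \cref{lem:convexhull-hausdorff}.

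With all three hypotheses verified, \cref{thm:exist} yields $T>t_0$ and an absolutely continuous $x$ on $[t_0,T]$ solving $\dot x(t)\in G(t,x(t))$; since $G$ is singleton-valued this is exactly a solution of \cref{eq:CP}. Finally, the qualifier ``with respect to $\eta>0$'' enters only through the requirement $\alpha_{\min}>0$: taking the normalization $\alpha_i=\nm{\nabla f_i}+\eta$ with $\eta>0$ guarantees a strictly positive lower bound on the denominators (namely $\alpha_{\min}=\eta$), so no generator blows up and the entire argument goes through.
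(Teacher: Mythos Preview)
Your proposal is correct and follows essentially the same route as the paper: combine \cref{lem:convexhull-hausdorff} with \cref{lem:projection-hausdorff} to obtain continuity (hence upper semicontinuity) of the singleton-valued $G(t,x)=\{-\proj_{C_\alpha(x,t)}(0)\}$, and then invoke \cref{thm:exist}. Your write-up is in fact more complete than the paper's one-line proof, since you explicitly verify the local-compactness hypothesis via the uniform bound $\nm{\proj_{C_\alpha(u,t)}(0)}\le M/\alpha_{\min}$ and make precise how $\rho(0)$ stays controlled.
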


\begin{proof}
	According to the \cref{lem:convexhull-hausdorff} and \cref{lem:projection-hausdorff}, it is straightforward to verify that the mapping $G: u \mapsto -\proj_{C_\alpha(u,t)}(0)$ is upper semicontinuous and closed convex-valued. Then, by \cref{thm:exist}, we obtain the conclusion. 
\end{proof}

\begin{thm}
	Suppose \cref{assum:L-smoothing,assume:BoundedGradiet,assume:BoundedLevelset} hold. Then for any $x_0 \in \mathbb{R}^n$, there exists an absolutely continuous function $x(t)$ defined on $[t_0, +\infty)$ that solves \cref{eq:CP}.
\end{thm}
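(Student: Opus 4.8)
The plan is to upgrade the local solution furnished by the previous theorem to a global one through a maximal–interval–of–existence argument, whose decisive ingredient is a uniform a priori bound on the velocity $\dot x(t)$. First I would invoke the previous theorem to obtain, for the prescribed initial datum $x_0$, a local absolutely continuous solution, and set
$T_{\max} := \sup\{\, T > t_0 : \text{\cref{eq:CP} admits an absolutely continuous solution on } [t_0, T] \,\}$.
By a standard maximal–solution (Zorn-type) argument one fixes a solution $x$ defined on the half-open interval $[t_0, T_{\max})$, and the whole task reduces to excluding $T_{\max} < +\infty$.

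The key step is to estimate $\|\dot x(t)\|$ uniformly in $t$. Since $\dot x(t) = -\proj_{C_\alpha(x(t),t)}(0)$ is by definition the minimum–norm element of the convex set $C_\alpha(x(t),t)$, and since each generator obeys $\big\| \tfrac{\nabla f_i(x(t))}{\alpha_i(x(t),t)} \big\| \le \tfrac{M}{\alpha_{\min}}$ by \cref{assume:BoundedGradiet} and the lower bound in \cref{eq:alpha-bounded}, convexity of the norm yields $C_\alpha(x(t),t) \subseteq \overline{B}_{M/\alpha_{\min}}(0)$. Consequently the minimum–norm element also satisfies $\|\dot x(t)\| = \|\proj_{C_\alpha(x(t),t)}(0)\| \le \tfrac{M}{\alpha_{\min}}$ for almost every $t$.

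With this velocity bound the blow-up is ruled out. If $T_{\max} < +\infty$, then for $t_0 \le s \le t < T_{\max}$ we have $\|x(t) - x(s)\| \le \int_s^t \|\dot x(\tau)\| \, d\tau \le \tfrac{M}{\alpha_{\min}}(t - s)$, so $x$ is globally Lipschitz on $[t_0, T_{\max})$ and satisfies the Cauchy criterion as $t \uparrow T_{\max}$; hence the limit $x^* := \lim_{t \uparrow T_{\max}} x(t)$ exists in $\R^n$. Setting $x(T_{\max}) = x^*$ keeps $x$ absolutely continuous and a solution up to $T_{\max}$. Applying the local existence theorem at the initial pair $(T_{\max}, x^*)$ produces a solution on $[T_{\max}, T_{\max} + \delta]$ for some $\delta > 0$, and concatenating it with $x$ gives an absolutely continuous solution on $[t_0, T_{\max} + \delta]$, contradicting the definition of $T_{\max}$. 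Therefore $T_{\max} = +\infty$ and $x$ is defined on all of $[t_0, +\infty)$.

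I expect the main obstacle to be the rigorous continuation/concatenation at the junction $T_{\max}$: one must check that the glued trajectory stays absolutely continuous and still satisfies the differential inclusion (equivalently \cref{eq:CP}) across $T_{\max}$, and that the hypotheses of \cref{thm:exist} — upper semicontinuity, closed-convex-valuedness, and local compactness of $(t,x)\mapsto \proj_{C_\alpha(x,t)}(0)$ — genuinely hold at the restarted point $(T_{\max}, x^*)$; these are already guaranteed by \cref{lem:convexhull-hausdorff} together with \cref{lem:projection-hausdorff}, exactly as in the previous theorem. The $L$-smoothness and bounded-level-set assumptions do not enter the finite-time blow-up estimate beyond keeping $C_\alpha$ well-behaved; the entire argument rests on \cref{assume:BoundedGradiet} and the lower bound $\alpha_{\min}$ delivering the uniform velocity estimate $\|\dot x\| \le M/\alpha_{\min}$.
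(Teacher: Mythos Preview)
Your proposal is correct and follows the same maximal--solution--plus--continuation skeleton as the paper, but you reach the crucial a~priori bound by a shorter and cleaner route. The paper introduces $h(t)=\|x(t)-x(t_0)\|$, derives a differential inequality $\tfrac{d}{dt}\tfrac12 h^2(t)\le c(1+\|x(t)\|)h(t)$ by splitting each normalized gradient into a Lipschitz increment plus the value at the origin, and then applies a Gronwall-type argument to conclude that $h$, hence $x$ and $\dot x$, stay bounded on $[t_0,T)$. You instead observe directly that every generator of $C_\alpha(x(t),t)$ lies in $\overline{B}_{M/\alpha_{\min}}(0)$ by \cref{assume:BoundedGradiet} and \cref{eq:alpha-bounded}, so the minimum-norm element $-\dot x(t)$ does too; this gives the uniform velocity bound $\|\dot x(t)\|\le M/\alpha_{\min}$ in one line, with no Gronwall machinery needed. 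Your argument is more elementary and makes transparent that \cref{assume:BoundedGradiet} together with the lower bound $\alpha_{\min}$ is already sufficient to rule out finite-time blow-up; the paper's detour through the Lipschitz constants $L$ and $L_\alpha$ is unnecessary under the stated hypotheses (it would be the natural approach if one dropped \cref{assume:BoundedGradiet} and relied only on $L$-smoothness plus a local bound).
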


\begin{proof}
	Define the family of solution sets:
	\[
	\mathfrak{S} := \left\{ x : [t_0, T) \to \mathbb{R}^n : T \in [t_0, +\infty],\ x \text{ is a solution to } \cref{eq:CP} \text{ on } [t_0, T) \right\}.
	\]
	Define the partial order $\preccurlyeq$ on $\mathfrak{S}$ as:
	\[
	\begin{aligned}
	&x_1(\cdot) \preccurlyeq x_2(\cdot) :\Leftrightarrow \\
	&T_1 \le T_2 \text{ and } x_1(\cdot) = x_2(\cdot) \text{ for all } t \in [t_0, T_1).
	\end{aligned}
	\]
	By Zorn's lemma, there exists a maximal element $x(t)$ defined on $[t_0, T)$. Suppose $T < +\infty$. Define:
	\[
	h(t) := \|x(t) - x(t_0)\|.
	\]
	Then:
	\[
	\begin{aligned}
	\frac{d}{dt}\frac{1}{2}h^2(t) &= \langle \dot{x}(t), x(t) - x(t_0) \rangle \le \|\dot{x}(t)\| h(t) \\
	&= \left\| {\proj}_{C_\alpha(x(t),t)}(0) \right\| h(t) \\
	&= \left\| \sum_{i=1}^m \lambda_i(x(t),t) \frac{\nabla f_i(x(t))}{\alpha(x(t),t)} \right\| h(t) \\
	&\le \sum_{i=1}^m \lambda_i \left\| \frac{\nabla f_i(x(t))}{\alpha(x(t),t)} - \frac{\nabla f_i(0)}{\alpha(0,t)} \right\| h(t) + \sum_{i=1}^m \lambda_i \left\| \frac{\nabla f_i(0)}{\alpha(0,t)} \right\| h(t) \\
	&\le \left( \frac{L}{\alpha_{\min}} + \frac{L_\alpha M}{\alpha_{\min}^2} \right) \|x(t)\| h(t) + \frac{M}{\alpha_{\min}}h(t) \\
    &=\frac{M}{\alpha_{\min}}\left[\left(\frac{L}{M}+\frac{L_\alpha }{\alpha_{\min} }\right)\| x(t)\|h(t)+h(t)\right]\\
	&\le c(1 + \|x(t)\|) h(t)
	\end{aligned}
	\]
	where $c = \frac{M}{\alpha_{\min}}\cdot \max \left\{ 1, \frac{L}{M} + \frac{L_\alpha}{\eta_{\min}} \right\}$. Let $\widetilde{c} := c(1 + \|x(t_0)\|)$. Using the triangle inequality, we obtain:
	\[
	\|\dot{x}(t)\| \le \widetilde{c}(1 + \|x(t) - x(t_0)\|).
	\]
	Combining the above inequalities yields:
	\[
	\frac{1}{2}h^2(t) \le \widetilde{c}(1 + h(t))h(t).
	\]
	By a Gronwall-type argument, for any $\varepsilon > 0$ and almost all $t \in [t_0, T - \varepsilon]$,
	\[
	h(t) \le \widetilde{c} T \exp(\widetilde{c} T).
	\]
	Thus $h$ is uniformly bounded on $[t_0, T)$. This implies the boundedness of both $x(t)$ and $\dot{x}(t)$. Since $x(t)$ is absolutely continuous, for any $t \in [t_0, T)$:
	\[
	x(t) = x(t_0) + \int_{t_0}^t \dot{x}(s)  ds
	\]
	and
	\[
	\|x(t') - x(t'')\| \le \sup_{t \in [t_0, T)} \|\dot{x}(t)\| \cdot |t' - t''|.
	\]
	Therefore, $\lim_{t \to T^-} x(t)$ exists. Set:
	\[
	x(T) = x(t_0) + \int_{t_0}^T \dot{x}(s)  ds.
	\]
	Similar to the proof in \cite{sonntag2024fast2}, this implies the existence of $\hat{x}(\cdot)$ defined on $[t_0, T + \delta)$ for some $\delta > 0$, contradicting the maximality of $x(\cdot)$ in $\mathfrak{S}$.
\end{proof}
	\begin{lem}\label{lem:inequalityofCP} 
	Let $x(t)$ be a trajectory solution of \cref{eq:CP}. Then
	\begin{equation} 
	(\alpha(x(t),t)) \|\dot{x}(t)\|^2 + \frac{d}{dt} f_i(x(t)) \leq 0
	\end{equation}
\end{lem}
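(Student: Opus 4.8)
The plan is to exploit the variational (obtuse-angle) characterization of the projection of the origin onto the closed convex set $C_\alpha(x(t),t)$, together with the fact that each normalized gradient $\nabla f_i(x(t))/\alpha_i(x(t),t)$ is a vertex of that convex hull and hence lies in it. Throughout I abbreviate $v(t) := \proj_{C_\alpha(x(t),t)}(0)$, so that the dynamics \cref{eq:CP} read $\dot{x}(t) = -v(t)$, and I recall that $v(t)$ is by definition the minimum-norm element of $C_\alpha(x(t),t)$. (The factor written $\alpha(x(t),t)$ in the statement is read as $\alpha_i(x(t),t)$, matching the index of $f_i$ on the right-hand side.)

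First I would invoke the standard variational inequality for the projection of a point onto a closed convex set: since $v(t) = \proj_{C_\alpha(x(t),t)}(0)$, for every $\xi \in C_\alpha(x(t),t)$ one has $\langle 0 - v(t), \xi - v(t)\rangle \le 0$, equivalently
\begin{equation*}
\langle v(t), \xi\rangle \ge \|v(t)\|^2 .
\end{equation*}
I would then specialize $\xi$ to the $i$-th vertex $\xi = \nabla f_i(x(t))/\alpha_i(x(t),t) \in C_\alpha(x(t),t)$ and multiply through by the strictly positive factor $\alpha_i(x(t),t)$ (positive by \cref{eq:alpha-bounded}), obtaining
\begin{equation*}
\langle v(t), \nabla f_i(x(t))\rangle \ge \alpha_i(x(t),t)\,\|v(t)\|^2 .
\end{equation*}

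Next I would translate the left-hand side into a time derivative. Because $x$ is absolutely continuous and each $f_i$ is $C^1$ (indeed $L$-smooth, \cref{assum:L-smoothing}), the composition $t \mapsto f_i(x(t))$ is absolutely continuous, and for almost every $t$ the chain rule gives $\frac{d}{dt} f_i(x(t)) = \langle \nabla f_i(x(t)), \dot{x}(t)\rangle = -\langle \nabla f_i(x(t)), v(t)\rangle$. Substituting this together with $\|v(t)\|^2 = \|\dot{x}(t)\|^2$ into the previous inequality and rearranging yields, for almost every $t$,
\begin{equation*}
\alpha_i(x(t),t)\,\|\dot{x}(t)\|^2 + \frac{d}{dt} f_i(x(t)) \le 0 ,
\end{equation*}
which is the claim.

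The only delicate point is regularity: the derivative $\frac{d}{dt} f_i(x(t))$ need not exist at every $t$, since $x$ is merely absolutely continuous, so the inequality should be read as holding for almost every $t \in [t_0, +\infty)$. I expect the main obstacle to be making this almost-everywhere statement rigorous — namely, justifying the chain rule for the composition of an absolutely continuous curve with a $C^1$ function and ensuring that $\dot{x}(t) = -v(t)$ holds a.e. — but this is standard once the absolute continuity of $x$ (established in the preceding existence theorem) is in hand, and the algebraic core of the argument reduces to the one-line projection inequality above.
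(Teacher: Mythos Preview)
Your proof is correct and is essentially identical to the paper's: both apply the projection (obtuse-angle) inequality with the choice $\xi=\nabla f_i(x(t))/\alpha_i(x(t),t)\in C_\alpha(x(t),t)$, then use $\dot{x}(t)=-v(t)$ and the chain rule for $f_i\circ x$. You add a careful note about the inequality holding only for almost every $t$, which the paper leaves implicit.
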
 

\begin{proof} 
	Note that $\frac{\nabla f_i(x(t))}{\alpha(x(t),t)} \in C_\alpha(x(t),t)$. By the projection theorem,
	\begin{equation} 
	\left\langle \frac{\nabla f_i(x(t))}{\alpha(x(t),t)} + \dot{x}(t), \dot{x}(t) \right\rangle \leq 0.
	\end{equation} 
	The result follows from direct computation and observing that $\frac{d}{dt} f_i(x(t)) = \langle \nabla f_i(x(t)), \dot{x}(t) \rangle$.
\end{proof} 

\begin{lem}
	Let $x(t)$ be a trajectory solution of \cref{eq:CP}. Then for any $s \leq t$,
	\begin{equation} 
	\mathcal{L}(f, f(x(t))) \subseteq \mathcal{L}(f, f(x(s)))
	\end{equation} 
\end{lem}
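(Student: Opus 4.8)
The plan is to reduce the claimed set inclusion to the componentwise monotonicity of the map $t \mapsto f(x(t))$ along the trajectory, and then obtain that monotonicity directly from \cref{lem:inequalityofCP}.

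First I would unpack the definition of the level set from \cref{def:levelset}. The inclusion $\mathcal{L}(f, f(x(t))) \subseteq \mathcal{L}(f, f(x(s)))$ is, by definition, equivalent to the implication: for every $y \in \R^n$, if $f(y) \le f(x(t))$ then $f(y) \le f(x(s))$. By transitivity of the partial order $\le$ on $\R^m$, this implication holds as soon as $f(x(t)) \le f(x(s))$, that is, $f_i(x(t)) \le f_i(x(s))$ for each $i = 1, \ldots, m$ whenever $s \le t$. Hence it suffices to prove that each scalar function $t \mapsto f_i(x(t))$ is non-increasing.

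Next I would establish this monotonicity using the preceding lemma. Since $x(\cdot)$ is absolutely continuous with bounded derivative (as guaranteed by the existence theorem above) and each $f_i$ is smooth, the composition $t \mapsto f_i(x(t))$ is absolutely continuous, and therefore equals the integral of its almost-everywhere derivative $\frac{d}{dt} f_i(x(t)) = \langle \nabla f_i(x(t)), \dot{x}(t) \rangle$. Invoking \cref{lem:inequalityofCP},
\begin{equation*}
\frac{d}{dt} f_i(x(t)) \le -\,\alpha(x(t),t)\,\|\dot{x}(t)\|^2 \le 0 \quad \text{for a.e.\ } t,
\end{equation*}
where the final inequality uses the lower bound $\alpha(x(t),t) \ge \alpha_{\min} > 0$ from \cref{eq:alpha-bounded} together with $\|\dot{x}(t)\|^2 \ge 0$. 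Integrating this inequality from $s$ to $t$ yields $f_i(x(t)) - f_i(x(s)) \le 0$ for every $s \le t$ and every $i$, which is exactly the componentwise relation $f(x(t)) \le f(x(s))$ needed in the first step.

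I expect no substantial obstacle here: the result is essentially a corollary of \cref{lem:inequalityofCP}. The only point requiring a little care is the passage from the pointwise sign of the derivative to the monotonicity of $f_i(x(\cdot))$, which is legitimate precisely because the trajectory is absolutely continuous, so $t \mapsto f_i(x(t))$ is an integral of its a.e.\ derivative rather than merely differentiable. Combining the two steps then closes the argument.
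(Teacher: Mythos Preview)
Your proposal is correct and is precisely the natural argument: componentwise monotonicity of $t\mapsto f_i(x(t))$ follows from \cref{lem:inequalityofCP} together with $\alpha\ge\alpha_{\min}>0$, and the level set inclusion is then immediate from \cref{def:levelset}. The paper states this lemma without proof, placing it directly after \cref{lem:inequalityofCP}, so your reasoning matches the intended (but unwritten) justification.
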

	\subsection{Convergence Analysis in the Convex Case}
	\begin{lem}[\cite{sonntag2024fast2}, Lemma 4.12]\label{lem:diffequaldiffi}
		Let \(\{h_i\}_{i=1,\cdots,m}\) be a set of continuously differentiable functions, \(h_i: [t_0, +\infty) \to \mathbb{R}\). Define \(h: [t_0, +\infty) \to \mathbb{R}\), \(t \mapsto h(t) := \min_{i=1,\cdots,m} h_i(t)\). Then, the following holds:
		
		\noindent\(\rm (i)\) \(h\) is differentiable almost everywhere on \([t_0, +\infty)\);
		
		\noindent\(\rm (ii)\) \(h\) satisfies almost everywhere on \([t_0, +\infty]\) that there exists \(i \in \{1,\cdots,m\}\) such that
		\[
		h(t) = h_i(t) \qquad \frac{d}{dt} h(t) = \frac{d}{dt} h_i(t)
		\]
	\end{lem}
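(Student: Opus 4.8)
The plan is to exploit that $h$ is a finite minimum of $C^1$ functions, so it is locally Lipschitz, and then to identify its derivative at each differentiability point through a first-order optimality argument at an active index.

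For part $\rm (i)$, I would first observe that each $h_i$ is continuously differentiable, hence on every compact subinterval $[a,b] \subset [t_0,+\infty)$ the derivative $h_i'$ is bounded and, by the mean value theorem, $h_i$ is Lipschitz on $[a,b]$ with some constant $L_i$. Setting $L := \max_i L_i$, the pointwise minimum is again $L$-Lipschitz on $[a,b]$, since for $s,t \in [a,b]$ one has $|\min_i h_i(s) - \min_i h_i(t)| \le \max_i |h_i(s) - h_i(t)| \le L|s-t|$. Thus $h$ is locally Lipschitz on $[t_0,+\infty)$. A locally Lipschitz function of one real variable is locally absolutely continuous and therefore differentiable almost everywhere (equivalently, by Rademacher's theorem), which establishes $\rm (i)$.

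For part $\rm (ii)$, let $t$ be any interior point of $[t_0,+\infty)$ at which $h$ is differentiable; by $\rm (i)$ the complement of this set has measure zero (the single endpoint $t_0$ being negligible). Since the minimum is attained, the active index set $I(t) := \{ i : h_i(t) = h(t) \}$ is nonempty, so pick any $i \in I(t)$. By definition of the minimum, $g(s) := h_i(s) - h(s) \ge 0$ for all $s \in [t_0,+\infty)$, while $g(t) = 0$, so $t$ is a global minimizer of $g$. Because $h_i$ is differentiable everywhere and $h$ is differentiable at $t$, the function $g$ is differentiable at the interior minimizer $t$, and the first-order optimality condition forces $g'(t) = 0$, that is, $\frac{d}{dt} h(t) = \frac{d}{dt} h_i(t)$, together with $h(t) = h_i(t)$. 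This proves $\rm (ii)$.

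I would expect the only genuine subtlety to lie in part $\rm (i)$: it is the one place where a nontrivial measure-theoretic input (Lipschitz functions are differentiable almost everywhere) is invoked, whereas part $\rm (ii)$ is a purely elementary optimality argument valid at each individual differentiability point. In fact the argument for $\rm (ii)$ yields the slightly stronger conclusion that $\frac{d}{dt} h(t) = \frac{d}{dt} h_i(t)$ holds for \emph{every} active index $i \in I(t)$, not merely for some index; the existence claim stated in the lemma then follows immediately, since $I(t) \ne \emptyset$.
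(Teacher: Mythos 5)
Your proof is correct: the finite minimum of $C^1$ functions is locally Lipschitz (via your estimate $|\min_i h_i(s)-\min_i h_i(t)|\le \max_i|h_i(s)-h_i(t)|$), hence differentiable almost everywhere, and Fermat's rule applied to $g=h_i-h\ge 0$ at an active index $i$ yields $h'(t)=h_i'(t)$ at each interior differentiability point. The paper itself gives no proof of this lemma --- it simply cites \cite{sonntag2024fast2} (Lemma~4.12) --- and your self-contained argument follows essentially the same standard route as that reference, with the harmless strengthening that the derivative identity holds for \emph{every} active index, not just some.
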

	
	\begin{thm}\label{thm:convergencerate} 
	Let $f_i$ be smooth convex functions with Lipschitz continuous gradients, and let $x(t)$ be a trajectory solution of \cref{eq:CP} for $\eta \geq 0$. Then
	\begin{equation} 
	u_0(x(t)) \leq \frac{R^2 \alpha_{\max}}{t}
	\end{equation} 
	\end{thm}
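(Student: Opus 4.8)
The plan is to prove the $O(1/t)$ convergence rate via a Lyapunov analysis centered on the merit function $u_0$. The key insight is that $u_0(x(t)) = \sup_{z} \min_i \bigl(f_i(x(t)) - f_i(z)\bigr)$, so I want to control, for a fixed but arbitrary $z$, the quantity $\min_i \bigl(f_i(x(t)) - f_i(z)\bigr)$ and then take the supremum. The natural Lyapunov candidate is an energy of the form $\mathcal{E}(t) = t \cdot \min_i \bigl(f_i(x(t)) - f_i(z)\bigr) + \tfrac{\alpha_{\max}}{2}\|x(t) - z\|^2$, mirroring the standard continuous-time gradient-flow argument; I expect the $R^2 \alpha_{\max}$ in the bound to come from the initial/boundary value of the squared-distance term, with $R$ being a bound on $\|x(t) - z\|$ (or $\|x_0 - z\|$) furnished by the bounded-level-set assumption.

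First I would fix $z \in \mathbb{R}^n$ and define $h_i(t) := f_i(x(t)) - f_i(z)$ and $h(t) := \min_i h_i(t)$, so that by \cref{lem:diffequaldiffi} the function $h$ is differentiable almost everywhere and at each such $t$ there is an active index $i$ with $h(t) = h_i(t)$ and $\tfrac{d}{dt} h(t) = \tfrac{d}{dt} h_i(t) = \langle \nabla f_i(x(t)), \dot{x}(t)\rangle$. Next I would invoke \cref{lem:inequalityofCP}, which gives $\alpha(x(t),t)\|\dot{x}(t)\|^2 + \tfrac{d}{dt} f_i(x(t)) \le 0$ for every $i$, and in particular for the active index; combined with $\alpha \ge \alpha_{\min} \ge 0$ this yields $\tfrac{d}{dt} h(t) \le -\alpha_{\min}\|\dot{x}(t)\|^2 \le 0$. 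The second ingredient is convexity: for the active index $i$, $f_i(z) \ge f_i(x(t)) + \langle \nabla f_i(x(t)), z - x(t)\rangle$, equivalently $h(t) = h_i(t) \le \langle \nabla f_i(x(t)), x(t) - z\rangle$. The goal is to rewrite this inner product in terms of $\dot{x}(t)$ using the projection/KKT structure of \cref{eq:CP}.

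The technical heart will be converting the convexity inequality into something compatible with the distance term. Since $\dot{x}(t) = -\proj_{C_\alpha}(0)$, the active gradient satisfies $\tfrac{\nabla f_i(x(t))}{\alpha(x(t),t)} \in C_\alpha(x(t),t)$, and the projection theorem gives $\langle \tfrac{\nabla f_i}{\alpha} + \dot{x}, \dot{x}\rangle \le 0$. I would use this to bound $\langle \nabla f_i(x(t)), x(t) - z\rangle$: writing $\nabla f_i = \alpha \cdot \tfrac{\nabla f_i}{\alpha}$ and comparing $\tfrac{\nabla f_i}{\alpha}$ with the minimum-norm element $-\dot{x}$, the projection inequality controls the relevant direction, letting me bound $h(t)$ by a term of the form $\alpha_{\max}\langle -\dot{x}(t), x(t) - z\rangle = -\alpha_{\max}\langle \dot{x}(t), x(t) - z\rangle = -\tfrac{\alpha_{\max}}{2}\tfrac{d}{dt}\|x(t) - z\|^2$ (up to sign-definite error terms that should drop out). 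Then computing $\tfrac{d}{dt}\mathcal{E}(t) = h(t) + t\tfrac{d}{dt}h(t) + \tfrac{\alpha_{\max}}{2}\tfrac{d}{dt}\|x(t)-z\|^2$ and substituting the two inequalities, the $h(t)$ and distance-derivative terms cancel while $t\tfrac{d}{dt}h(t) \le 0$, yielding $\tfrac{d}{dt}\mathcal{E}(t) \le 0$.

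Finally I would integrate $\mathcal{E}(t) \le \mathcal{E}(t_0)$ (taking $t_0 = 0$): this gives $t\,h(t) \le \mathcal{E}(0) = \tfrac{\alpha_{\max}}{2}\|x_0 - z\|^2$, hence $\min_i\bigl(f_i(x(t)) - f_i(z)\bigr) \le \tfrac{\alpha_{\max}\|x_0 - z\|^2}{2t}$. Taking the supremum over $z$ and absorbing $\sup_z \|x_0 - z\|^2$ into $R^2$ via \cref{assume:BoundedLevelset} (the level sets are bounded, so the relevant $z$ range over a bounded set and $\|x_0 - z\| \le R$) delivers $u_0(x(t)) \le \tfrac{R^2\alpha_{\max}}{t}$. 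The main obstacle I anticipate is the step where the bare gradient $\nabla f_i$ (rather than its normalized version) must be linked to $\dot{x}$ through the projection inequality: the normalization by $\alpha$ means the descent direction is the minimum-norm element of the \emph{normalized} convex hull, not of $C(x)$, so I must be careful that the factor $\alpha_{\max}$ (versus $\alpha_{\min}$) appears on the correct side to make the telescoping exact and to match the claimed constant.
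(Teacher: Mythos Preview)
Your overall Lyapunov structure matches the paper's: up to a factor of $\alpha_{\max}$, your $\mathcal{E}(t)$ is exactly the paper's $\mathcal{E}_z(s) = \tfrac{s}{\alpha_{\max}}\min_i\bigl(f_i(x(s))-f_i(z)\bigr) + \tfrac12\|x(s)-z\|^2$, and the use of \cref{lem:inequalityofCP} and \cref{lem:diffequaldiffi} is identical. However, the step you flag as ``the main obstacle'' is a genuine gap, and the mechanism you propose for it does not work.

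You want to show $h(t) \le \alpha_{\max}\langle -\dot{x}(t), x(t)-z\rangle$, and you propose to get it from the projection inequality applied to the active vertex $\tfrac{\nabla f_i}{\alpha_i}\in C_\alpha$. But that inequality, $\bigl\langle \tfrac{\nabla f_i}{\alpha_i} + \dot{x}, \dot{x}\bigr\rangle \le 0$, only constrains the component of $\tfrac{\nabla f_i}{\alpha_i}$ along $\dot{x}$; it says nothing about $\langle \tfrac{\nabla f_i}{\alpha_i}, x(t)-z\rangle$ for an unrelated direction $x(t)-z$. The paper instead expands $-\dot{x}(s) = \sum_{j}\theta_j \tfrac{\nabla f_j(x(s))}{\alpha_j(x(s),s)}$ via the KKT multipliers and applies convexity to \emph{every} index $j$, obtaining
\[
\langle x(s)-z,\dot{x}(s)\rangle \;\le\; -\sum_{j}\frac{\theta_j}{\alpha_j(x(s),s)}\bigl(f_j(x(s))-f_j(z)\bigr).
\]
The passage from the right-hand side to $-\tfrac{1}{\alpha_{\max}}\min_j\bigl(f_j(x(s))-f_j(z)\bigr)$ then requires each term $f_j(x(s))-f_j(z)$ to be nonnegative, so that $\tfrac{1}{\alpha_j}\ge \tfrac{1}{\alpha_{\max}}$ can be used termwise. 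This is precisely why the paper restricts $z$ to $\mathcal{L}(f,f(x(t)))$ from the outset (and uses monotonicity of $f_i(x(\cdot))$ to propagate this to all $s\in[0,t]$), whereas you fix an arbitrary $z\in\mathbb{R}^n$. Without that sign condition the inequality can fail when the $\alpha_j$ differ, which is exactly the regime that distinguishes \cref{eq:MBGF} from the unnormalized flow. In short: replace ``projection inequality for the active index'' by ``KKT decomposition of $\dot{x}$ plus convexity for all $j$'', and restrict $z$ to the level set before, not after, the Lyapunov computation.
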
 
	\begin{proof}
	For any $t \geq 0$, select $z \in \mathcal{L}(f, f(x(t)))$. Define functions on $[0, t]$ as follows:
	\begin{equation} 
	\mathcal{E}_z^i(s) = \frac{s}{\alpha_{\max}} (f_i(x(s)) - f_i(z)) + \frac{1}{2} \|x(s) - z\|^2, \quad \forall s \in [0, t]
	\end{equation}
	\begin{equation} 
	\mathcal{E}_z(s) = \frac{s}{\alpha_{\max}} \min_{i=1,\dots,m} (f_i(x(s)) - f_i(z)) + \frac{1}{2} \|x(s) - z\|^2, \quad \forall s \in [0, t]
	\end{equation} 
	Then
	\begin{equation} 
	\begin{aligned}
	\frac{d}{ds} \mathcal{E}_z^i(s) 
	&= \frac{1}{\alpha_{\max}} (f_i(x(s)) - f_i(z)) + \frac{s}{\alpha_{\max}} \langle \dot{x}(s), \nabla f_i(x(s)) \rangle + \langle x(s) - z, \dot{x}(s) \rangle \\
	&\overset{(*)}{\leq} \frac{1}{\alpha_{\max}} (f_i(x(s)) - f_i(z)) + \langle x(s) - z, \dot{x}(s) \rangle \\
	&= \frac{1}{\alpha_{\max}} (f_i(x(s)) - f_i(z)) + \left\langle x(s) - z, -\sum_{i=1}^m \theta_i \frac{\nabla f_i(x(s))}{\alpha(x(s),s)} \right\rangle \\
	&\leq \frac{1}{\alpha_{\max}} (f_i(x(s)) - f_i(z)) - \sum_{i=1}^m \theta_i \frac{1}{\alpha(x(s),s)} \Big( f_i(x(s)) - f_i(z) \Big) \\
	&\leq \frac{1}{\alpha_{\max}} (f_i(x(s)) - f_i(z)) - \frac{1}{\alpha_{\max}} \min_{i=1,\dots,m} \big( f_i(x(s)) - f_i(z) \big)
	\end{aligned}
	\end{equation} 
	where inequality $(*)$ follows from \cref{lem:inequalityofCP}. By the  \cref{lem:diffequaldiffi}, we obtain
	\begin{equation} 
	\frac{d}{ds} \mathcal{E}_z(s) \leq 0.
	\end{equation} 
	Thus,
	\begin{equation} 
	\frac{t}{\alpha_{\max}} \min_{i=1,\dots,m} (f_i(x(t)) - f_i(z)) \leq \mathcal{E}_z(t) \leq \mathcal{E}_z(0) = \frac{1}{2} \|x_0 - z\|^2 \leq R^2.
	\end{equation} 
	where $R=\sup_{x\in \mathcal L(f,f(x(t)))}\|x\|$. After straightforward computation, we have
	\begin{equation} 
	u_0(x(t)) = \sup_{z \in \mathcal{L}(f, f(x(t)))} \min_{i=1,\dots,m} (f_i(x(t)) - f_i(z)) \leq \frac{R^2 \alpha_{\max}}{t}.
	\end{equation} 
  \end{proof}

	 \begin{lem}[Opial's Lemma]\label{lem:opial}
	 Let $S \subseteq \mathbb{R}^n$ be a nonempty set and let $x: [0, +\infty) \to \mathbb{R}^n$. Suppose $x$ satisfies the following conditions:
     \begin{itemize}
         \item[$\rm (i)$] Every limit point of $x$ belongs to $S$;
         \item[$\rm (ii)$] For every $z \in S$, $\lim_{t \to \infty} \|x(t) - z\|$ exists.
     \end{itemize}
	 \noindent Then $x(t)$ converges to some $x^\infty \in S$ as $t \to \infty$.
	\end{lem}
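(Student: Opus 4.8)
The plan is to prove Opial's Lemma via the classical two-pronged strategy: first establish that the trajectory is bounded and hence possesses at least one limit point (which lies in $S$ by hypothesis (i)), and then show that this limit point is unique, from which convergence follows at once.

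First I would fix an arbitrary $z_0 \in S$, which is possible since $S \neq \emptyset$. By hypothesis (ii) the quantity $\|x(t) - z_0\|$ has a finite limit as $t \to \infty$, hence is bounded on $[0,+\infty)$; combined with the triangle inequality $\|x(t)\| \le \|x(t) - z_0\| + \|z_0\|$, this shows that $x(\cdot)$ is bounded. By the Bolzano--Weierstrass theorem the bounded trajectory then admits at least one limit point, and by hypothesis (i) every such limit point belongs to $S$.

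The main step, and the only genuinely nontrivial one, is to prove uniqueness of the limit point. Suppose $x_1^\infty$ and $x_2^\infty$ are two limit points; by (i) both lie in $S$, so by (ii) both $\lim_{t}\|x(t) - x_1^\infty\|$ and $\lim_{t}\|x(t) - x_2^\infty\|$ exist. I would then expand
\[
\|x(t) - x_1^\infty\|^2 - \|x(t) - x_2^\infty\|^2 = 2\langle x(t),\, x_2^\infty - x_1^\infty \rangle + \|x_1^\infty\|^2 - \|x_2^\infty\|^2 ,
\]
whose left-hand side converges as $t \to \infty$; hence $\langle x(t),\, x_2^\infty - x_1^\infty\rangle$ converges to some $\ell$. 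Evaluating this limit along a subsequence $t_k \to \infty$ with $x(t_k) \to x_1^\infty$ yields $\langle x_1^\infty,\, x_2^\infty - x_1^\infty\rangle = \ell$, and along a subsequence $s_k \to \infty$ with $x(s_k) \to x_2^\infty$ yields $\langle x_2^\infty,\, x_2^\infty - x_1^\infty\rangle = \ell$. Subtracting gives $\|x_1^\infty - x_2^\infty\|^2 = 0$, so $x_1^\infty = x_2^\infty$.

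Finally, a bounded trajectory in $\mathbb{R}^n$ with a unique limit point must converge to it: otherwise there would exist $\varepsilon > 0$ and times $t_k \to \infty$ with $\|x(t_k) - x^\infty\| \ge \varepsilon$, and this bounded subsequence would in turn have a convergent sub-subsequence whose limit is a limit point of $x$ distinct from $x^\infty$, contradicting uniqueness. Hence $x(t) \to x^\infty \in S$, completing the proof. I expect the inner-product expansion in the uniqueness argument to be the crux; the boundedness claim and the concluding compactness argument are routine.
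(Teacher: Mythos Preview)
Your proof is correct and is the standard argument for Opial's Lemma; the paper itself does not prove the lemma but simply refers the reader to \cite{Attouch2014}, so your write-up in fact supplies more detail than the paper does.
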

	\begin{proof}
	See \cite{Attouch2014}
	\end{proof}
	
	\begin{thm}
	Let $x: [0, +\infty) \to \mathbb{R}^n$ be a trajectory solution of \cref{eq:CP} for smooth convex functions $f_i$, $i=1,\dots,m$. Assume that each $f_i$ is bounded below and that the conditions of \cref{thm:convergencerate} hold. Then $x(t)$ converges to a weak Pareto point of \cref{eq:MOP}.
	\end{thm}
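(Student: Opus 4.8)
The plan is to apply Opial's Lemma (\cref{lem:opial}) with the target set taken to be the weak Pareto points lying in the \emph{terminal} level set of the trajectory. First I would record two monotonicity facts. By \cref{lem:inequalityofCP} each map $t \mapsto f_i(x(t))$ is nonincreasing, and since each $f_i$ is bounded below, the limit $f_i^\infty := \lim_{t\to\infty} f_i(x(t))$ exists for every $i$. Writing $f^\infty = (f_1^\infty,\dots,f_m^\infty)^\top$, I set
\[
S := \mathcal{P}_w \cap \mathcal{L}(f, f^\infty).
\]
Boundedness of the trajectory is then immediate: since $f(x(t)) \le f(x(t_0))$ we have $x(t) \in \mathcal{L}(f, f(x(t_0)))$ for all $t$, and this level set is bounded by \cref{assume:BoundedLevelset}; in particular limit points of $x(\cdot)$ exist.

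Next I would verify condition (ii) of Opial's Lemma. Fix $z \in S$. Because $f_i(x(t))$ decreases to $f_i^\infty$ while $f_i(z) \le f_i^\infty$, we get $f_i(x(t)) - f_i(z) \ge 0$ for every $t$ and every $i$. Differentiating $\tfrac12\|x(t)-z\|^2$ and substituting $\dot x(t) = -\sum_i \theta_i \nabla f_i(x(t))/\alpha(x(t),t)$, the convexity inequality $\langle x(t)-z,\, \nabla f_i(x(t))\rangle \ge f_i(x(t)) - f_i(z)$ yields
\[
\frac{d}{dt}\tfrac12\|x(t)-z\|^2 \le -\sum_{i=1}^m \frac{\theta_i}{\alpha(x(t),t)}\bigl(f_i(x(t))-f_i(z)\bigr) \le 0 .
\]
Hence $\|x(t)-z\|$ is nonincreasing and bounded below, so $\lim_{t\to\infty}\|x(t)-z\|$ exists.

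For condition (i), let $\bar x$ be any limit point, say $x(t_k) \to \bar x$ with $t_k \to \infty$. From \cref{thm:convergencerate}, and using that the relevant level sets are contained in the bounded set $\mathcal{L}(f,f(x(t_0)))$ so that $R$ is uniformly bounded, we obtain $u_0(x(t_k)) \to 0$. Lower semicontinuity of $u_0$ (\cref{thm:weakpareto}(iii)) then forces $u_0(\bar x) \le \liminf_k u_0(x(t_k)) = 0$, and since $u_0 \ge 0$ this gives $u_0(\bar x)=0$, so $\bar x \in \mathcal{P}_w$ by \cref{thm:weakpareto}(ii). Continuity of each $f_i$ gives $f_i(\bar x) = f_i^\infty$, whence $\bar x \in \mathcal{L}(f, f^\infty)$ and therefore $\bar x \in S$; in particular $S \neq \emptyset$. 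Both hypotheses of \cref{lem:opial} now hold, so $x(t)$ converges to some $x^\infty \in S \subseteq \mathcal{P}_w$, a weak Pareto point.

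The step I expect to be most delicate is the correct definition of $S$: it must be large enough to contain every limit point (which forces the use of the terminal values $f_i^\infty$ rather than $f(x(t_0))$), yet small enough that the Fej\'er-type monotonicity of $\|x(t)-z\|$ holds for every $z \in S$ (which needs $f_i(z) \le f_i(x(t))$ for all $t$, i.e.\ $z \in \mathcal{L}(f,f^\infty)$). Reconciling these two requirements through the limits $f_i^\infty$ is the crux; the remaining estimates are routine consequences of convexity and the projection inequality of \cref{lem:inequalityofCP}.
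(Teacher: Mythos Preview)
Your proof is correct and follows essentially the same route as the paper: both arguments hinge on the monotone decrease of $f_i(x(t))$ from \cref{lem:inequalityofCP}, the Fej\'er-type monotonicity of $\|x(t)-z\|$ for $z$ in the terminal level set (via convexity), the decay $u_0(x(t))\to 0$ from \cref{thm:convergencerate} together with lower semicontinuity of $u_0$, and then Opial's Lemma. The only cosmetic difference is that the paper takes $S=\mathcal L(f,f^\infty)$ and checks weak Pareto optimality \emph{after} obtaining the limit, whereas you take the smaller set $S=\mathcal P_w\cap\mathcal L(f,f^\infty)$ and verify weak Pareto optimality for each cluster point \emph{before} invoking Opial; both orderings work.
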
 
	\begin{proof} 
	Define the set
	\begin{equation} 
	S := \left\{ z \in \mathbb{R}^n \mid f_i(z) \leq f_i^\infty \text{ for all } i=1,\dots,m \right\}
	\end{equation} 
	where $f_i^\infty = \lim_{t \to \infty} f_i(x(t))$. The existence of these limits follows from \cref{lem:inequalityofCP} and the boundedness below of $f_i$. Moreover, $x(t)$ is bounded, so it has a limit point $x^\infty \in \mathbb{R}^n$. Thus, there exists a sequence $(x(t_k))_{k \geq 0}$ such that $\lim_{k \to \infty} x(t_k) = x^\infty$. By the continuity of $f_i$, we have
	$$
	f_i(x^\infty) \leq \liminf_{k \to \infty} f_i(x(t_k)) = \lim_{k \to \infty} f_i(x(t_k)) = f_i^\infty.
	$$
	Hence, $x^\infty \in S$. Furthermore, for any $z \in S$, consider the function $h_z(t) = \frac{1}{2} \|x(t) - z\|^2$. Then
	$$
	\dot{h}_z(t) = \langle x(t) - z, \dot{x}(t) \rangle \leq 0,
	$$
	where the inequality follows from \cref{lem:inequalityofCP}. Therefore, $h_z(t)$ is non-increasing, which implies that $\lim_{t \to \infty} \|x(t) - z\|$ exists. By \cref{lem:opial} (Opial's Lemma), $x(t)$ converges to some $x^\infty \in S$. Moreover, by the previous theorem, we know that $\lim_{t \to \infty} u_0(x(t)) = 0$, and by the lower semicontinuity of $u_0(x)$, we have
	$$
	u_0(x^\infty) \leq \liminf_{t \to \infty} u_0(x(t)) = 0.
	$$
	Then, by \cref{thm:weakpareto} of \cref{eq:MOP}.
	\end{proof} 
\subsection{Convergence analysis in the strongly convex case}
In this section, we assume that the objective functions $f_i$ are all $\mu_i$-strongly convex, i.e.,  
\begin{equation} 
f_i(z) \ge f_i(x) + \langle \nabla f_i(x), z - x \rangle + \frac{\mu_i}{2} \|x - z\|^2.
\end{equation} 
Let $\mu = \min_{i=1,\cdots,m} \mu_i$.
\begin{thm}
Suppose \cref{assum:L-smoothing,assume:BoundedGradiet,assume:BoundedLevelset} hold. Let $x(t)$ be the trajectory solution of \cref{eq:CP}, and assume each $f_i$ is $\mu_i$-strongly convex with $\mu \ge  \frac{1}{\alpha_{\max}}$. Then,  
\begin{equation}
u_0(x(t)) = O(e^{-\frac{1}{\alpha_{\max}}t}).
\end{equation}
\end{thm}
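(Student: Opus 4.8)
The plan is to mirror the $O(1/t)$ argument of \cref{thm:convergencerate}, but to replace the linear time weight $s/\alpha_{\max}$ by an exponential one, so that the resulting decay is geometric rather than algebraic. Fix $t \ge 0$ and $z \in \mathcal{L}(f, f(x(t)))$, and for a constant $c \ge 1$ to be fixed later, define on $[0,t]$ the individual energies
\begin{equation*}
\mathcal{E}_z^i(s) = e^{s/\alpha_{\max}}\Big[(f_i(x(s)) - f_i(z)) + \tfrac{c}{2}\|x(s) - z\|^2\Big],
\end{equation*}
together with $\mathcal{E}_z(s) = e^{s/\alpha_{\max}}\big[\min_{i}(f_i(x(s)) - f_i(z)) + \tfrac{c}{2}\|x(s)-z\|^2\big]$. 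Because each $s \mapsto f_i(x(s))$ is non-increasing (a direct consequence of \cref{lem:inequalityofCP}) and $f_i(z) \le f_i(x(t))$ by the choice $z \in \mathcal{L}(f,f(x(t)))$, we have $f_i(x(s)) - f_i(z) \ge 0$ for every $s \in [0,t]$; this nonnegativity of all objective gaps will be used repeatedly to control signs.

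The core step is to prove $\tfrac{d}{ds}\mathcal{E}_z(s) \le 0$ almost everywhere. Differentiating $\mathcal{E}_z^i$ at the index $i$ attaining the minimum (which is legitimate by \cref{lem:diffequaldiffi}), I would discard the term $\langle \nabla f_i(x(s)), \dot{x}(s)\rangle$ via \cref{lem:inequalityofCP}, and treat the cross term using $\dot{x}(s) = -\sum_j \theta_j \nabla f_j(x(s))/\alpha_j(x(s),s)$ together with $\mu_j$-strong convexity, which yields
\begin{equation*}
\langle x(s) - z, \dot{x}(s)\rangle \le -\tfrac{1}{\alpha_{\max}}\Big[(f_i(x(s)) - f_i(z)) + \tfrac{\mu}{2}\|x(s)-z\|^2\Big],
\end{equation*}
where I have used $\alpha_j \le \alpha_{\max}$, the nonnegativity of the gaps, $\sum_j \theta_j(f_j - f_j(z)) \ge \min_i(f_i - f_i(z))$ and $\sum_j \theta_j \mu_j \ge \mu$. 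Collecting terms, $\tfrac{d}{ds}\mathcal{E}_z$ is bounded by $e^{s/\alpha_{\max}}$ times $\big(\tfrac{1}{\alpha_{\max}} - \tfrac{c}{\alpha_{\max}}\big)(f_i - f_i(z)) + \tfrac{c}{2}\big(\tfrac{1}{\alpha_{\max}} - \tfrac{\mu}{\alpha_{\max}}\big)\|x - z\|^2$; choosing $c \ge 1$ and invoking the standing strong-convexity hypothesis on $\mu$ is designed to force both coefficients to be nonpositive, so that $\mathcal{E}_z$ is non-increasing on $[0,t]$.

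With monotonicity in hand, the conclusion is routine: $e^{t/\alpha_{\max}}\min_i(f_i(x(t)) - f_i(z)) \le \mathcal{E}_z(t) \le \mathcal{E}_z(0) = \min_i(f_i(x_0) - f_i(z)) + \tfrac{c}{2}\|x_0 - z\|^2$, and by \cref{assume:BoundedLevelset} the set $\mathcal{L}(f, f(x(t)))$ is bounded, so the right-hand side is dominated by a constant $C$ uniform in $z$; taking the supremum over $z \in \mathcal{L}(f, f(x(t)))$ gives $u_0(x(t)) \le C e^{-t/\alpha_{\max}}$, with the weak-optimality interpretation supplied by \cref{thm:weakpareto}. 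I expect the main obstacle to be precisely the sign control of $\tfrac{d}{ds}\mathcal{E}_z$: the objective-gap coefficient dictates $c \ge 1$, while the distance coefficient is governed by the balance between the target exponent $1/\alpha_{\max}$ and the quantity $\mu/\alpha_{\max}$ produced by strong convexity. Pinning down that the hypothesis $\mu \ge 1/\alpha_{\max}$ is exactly what renders the distance coefficient nonpositive at the claimed exponent — rather than only at the weaker exponent that the naive balancing suggests — is the delicate accounting step; a secondary, but necessary, point is the uniform-in-$z$ bound on $\mathcal{E}_z(0)$, which rests entirely on boundedness of the level set.
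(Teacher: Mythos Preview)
Your approach is essentially the same as the paper's. The paper defines the unweighted Lyapunov function $\mathcal{W}(s) = \min_i(f_i(x(s)) - f_i(z)) + \tfrac{1}{2}\|x(s)-z\|^2$, establishes the differential inequality $\tfrac{d}{ds}\mathcal{W}(s) + \tfrac{1}{\alpha_{\max}}\mathcal{W}(s) \le 0$, and then multiplies by the integrating factor $e^{s/\alpha_{\max}}$ before integrating. You build the exponential weight directly into $\mathcal{E}_z$ and prove monotonicity; the two computations are equivalent line by line (your choice $c=1$ recovers exactly the paper's $\mathcal{W}$). The ingredients --- dropping $\langle \nabla f_i,\dot x\rangle$ via \cref{lem:inequalityofCP}, bounding $\langle x-z,\dot x\rangle$ through strong convexity and $\alpha_j\le\alpha_{\max}$, invoking \cref{lem:diffequaldiffi}, and finally taking the supremum over $z$ in the bounded level set --- are identical in both.

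Your caution about the ``delicate accounting step'' is justified, and you should not expect it to disappear. The paper asserts the bound $\langle \nabla f_i,\dot x\rangle + \langle x-z,\dot x\rangle \le -\tfrac{1}{\alpha_{\max}}\min_j(f_j-f_j(z)) - \tfrac{\mu}{2}\|x-z\|^2$, from which the hypothesis $\mu\alpha_{\max}\ge 1$ indeed closes the argument. But your own derivation of the cross term --- which is the careful one --- gives only $-\tfrac{\mu}{2\alpha_{\max}}\|x-z\|^2$, and with that coefficient your distance term becomes $\tfrac{c(1-\mu)}{2\alpha_{\max}}\|x-z\|^2$, forcing $\mu\ge 1$ rather than $\mu\ge 1/\alpha_{\max}$. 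In other words, you have faithfully reproduced the paper's scheme and correctly located the one place where the paper offers no justification for how the extra factor of $\alpha_{\max}$ is removed from the distance-term coefficient; the paper does not supply an argument beyond what you already have.
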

\begin{proof}
    Define the functions  
    \begin{equation} 
    \mathcal{W}_i(t) := f_i(x) - f_i(z) + \frac{1}{2} \|x(s) - z\|^2,
    \end{equation} 
    and  
    \begin{equation} 
    \mathcal{W}(t) := \min_{i=1,\cdots,m} (f_i(x) - f_i(z)) + \frac{1}{2} \|x(s) - z\|^2.
    \end{equation}
    For $t > t_0$ and any $s \in [t_0, t]$, we compute  
    \begin{equation} 
    \begin{aligned}
    \frac{d}{ds} {\mathcal{W}}_i(s) &= \left\langle \nabla f_i(x(s)), \dot{x}(s) \right\rangle + \left\langle x(s) - z, \dot{x}(s) \right\rangle \\
    &\le -\frac{1}{\alpha_{\max}} \min_{i=1,\cdots,m} (f_i(x(s)) - f_i(z)) - \frac{\mu}{2} \|x(s) - z\|^2 \\
    &= -\frac{1}{\alpha_{\max}} \left( \min_{i=1,\cdots,m} (f_i(x(s)) - f_i(z)) + \frac{\mu \alpha_{\max}}{2} \|x(s) - z\|^2 \right) \\
    &= -\frac{1}{\alpha_{\max}} \left( \min_{i=1,\cdots,m} (f_i(x) - f_i(z)) + \frac{1}{2} \|x(s) - z\|^2 \right) + \frac{1 - \mu \alpha_{\max}}{2\alpha_{\max}} \|x(s) - z\|^2 \\
    &\le -\frac{1}{\alpha_{\max}} \left( \min_{i=1,\cdots,m} (f_i(x) - f_i(z)) + \frac{1}{2} \|x(s) - z\|^2 \right) \\
    &= -\frac{1}{\alpha_{\max}} \mathcal{W}(t).
    \end{aligned}
    \end{equation}
    Thus, for almost all $s \in [t_0, t]$, we have  
    $$
    \frac{d}{ds} \mathcal{W}(s) + \frac{1}{\alpha_{\max}} \mathcal{W}(s) \le 0.
    $$
    It follows that $\frac{d}{ds} \left( e^{\frac{1}{\alpha_{\max}} t} \mathcal{W}(s) \right) \le 0$. Integrating both sides over $[t_0, t]$ yields  
    $$
    \begin{aligned}
    e^{\frac{1}{\alpha_{\max}} t} \mathcal{W}(t) &\le e^{\frac{1}{\alpha_{\max}} t_0} \min_{i=1,\cdots,m} (f_i(x_0) - f_i(z)) + e^{\frac{1}{\alpha_{\max}} t_0} \frac{1}{2} \|x_0 - z\|^2 \\
    &\le e^{\frac{1}{\alpha_{\max}} t_0} \min_{i=1,\cdots,m} (f_i(x_0) - \inf f_i) + e^{\frac{1}{\alpha_{\max}} t_0} R^2.
    \end{aligned}
    $$
    By the definition of $\mathcal{W}(t)$, we obtain  
    $$
    e^{\frac{1}{\alpha_{\max}} t} \min_{i=1,\cdots,m} (f_i(x(t)) - f_i(z)) + \frac{e^{\frac{1}{\alpha_{\max}} t}}{2} \|x(t) - z\|^2 \le e^{\frac{1}{\alpha_{\max}} t_0} \min_{i=1,\cdots,m} (f_i(x_0) - \inf f_i) + e^{\frac{1}{\alpha_{\max}} t_0} R^2.
    $$
    Due to the non-negativity of the term $\frac{1}{2} \|x(t) - z\|^2$, taking the supremum over $z$ on both sides yields the conclusion.
\end{proof}
\begin{thm}
    Suppose Assumptions 1, 2, and 3 hold. Let $x(t)$ be the trajectory solution of $\rm(CP)$, and assume each $f_i$ is $\mu$-strongly convex with $\mu > \frac{1}{\alpha_{\max}}$. Suppose $x^*$ satisfies $\lim_{t \to \infty} x(t) = x^*$. Then,  
    $$
     \|x(t) - x^*\|^2 = O(e^{-\frac{1}{\alpha_{\max}} t}).
    $$
    
\end{thm}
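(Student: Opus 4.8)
The plan is to reuse the Lyapunov function $\mathcal{W}$ from the preceding theorem with the specific reference point $z = x^*$, and to supplement its exponential decay with a matching quadratic lower bound. Concretely, I would set
\[
\mathcal{W}(t) := \min_{i=1,\cdots,m}\big(f_i(x(t)) - f_i(x^*)\big) + \frac{1}{2}\|x(t) - x^*\|^2 .
\]
The preceding theorem's proof already establishes, for any fixed reference point (here $z = x^*$), the differential inequality $\frac{d}{ds}\mathcal{W}(s) + \frac{1}{\alpha_{\max}}\mathcal{W}(s) \le 0$ almost everywhere; hence after integrating $\frac{d}{ds}\big(e^{s/\alpha_{\max}}\mathcal{W}(s)\big) \le 0$ over $[t_0,t]$ one obtains
\[
\mathcal{W}(t) \le e^{-\frac{1}{\alpha_{\max}}(t-t_0)}\,\mathcal{W}(t_0) = O\big(e^{-t/\alpha_{\max}}\big),
\]
since $\mathcal{W}(t_0) = \min_i(f_i(x_0)-f_i(x^*)) + \tfrac12\|x_0-x^*\|^2$ is a finite constant. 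This supplies the upper half of the argument essentially verbatim from the strongly convex case: the hypothesis $\mu > 1/\alpha_{\max}$ in particular gives $\mu \ge 1/\alpha_{\max}$, so the extra term $\frac{1-\mu\alpha_{\max}}{2\alpha_{\max}}\|x(s)-x^*\|^2$ appearing there is nonpositive and may be discarded.

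The new ingredient is a lower bound of the form $\mathcal{W}(t) \ge \tfrac{1}{2}\|x(t)-x^*\|^2$, for which it suffices to show that $\min_i(f_i(x(t)) - f_i(x^*)) \ge 0$ for every $t$. I would derive this from the monotonicity furnished by \cref{lem:inequalityofCP}: since $\alpha(x(t),t)\|\dot{x}(t)\|^2 + \frac{d}{dt}f_i(x(t)) \le 0$ and $\alpha \ge \alpha_{\min} > 0$, each map $t \mapsto f_i(x(t))$ is non-increasing. Being bounded below by \cref{assum:L-smoothing} it converges, and by continuity of $f_i$ together with the standing hypothesis $\lim_{t\to\infty}x(t) = x^*$ its limit equals $f_i(x^*)$. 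A non-increasing function converging to $f_i(x^*)$ stays above its limit, so $f_i(x(t)) \ge f_i(x^*)$ for all $t$ and all $i$, whence $\min_i(f_i(x(t)) - f_i(x^*)) \ge 0$ and therefore $\mathcal{W}(t) \ge \tfrac12\|x(t)-x^*\|^2$.

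Combining the two bounds then yields
\[
\frac{1}{2}\|x(t) - x^*\|^2 \le \mathcal{W}(t) \le e^{-\frac{1}{\alpha_{\max}}(t-t_0)}\mathcal{W}(t_0),
\]
so $\|x(t) - x^*\|^2 = O(e^{-t/\alpha_{\max}})$, as claimed. I expect the main (and essentially only genuinely new) difficulty to be the lower-bound step: the decay of $\mathcal{W}$ is inherited directly from the previous theorem, whereas controlling the sign of $\min_i(f_i(x(t)) - f_i(x^*))$ is where the assumed convergence $x(t)\to x^*$ and the descent property of \cref{lem:inequalityofCP} genuinely enter. A secondary point worth recording is that routing the estimate through $\mathcal{W}$ is what produces exactly the rate $e^{-t/\alpha_{\max}}$; a direct Grönwall estimate on $\tfrac12\|x(t)-x^*\|^2$ via strong convexity would instead give a rate of the form $e^{-\mu t/\alpha_{\max}}$, which need not match the stated bound when $\mu$ is small, so the $\mathcal{W}$-based route is the natural choice here.
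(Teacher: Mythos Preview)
Your proposal is correct and follows essentially the same route as the paper: specialize the Lyapunov estimate from the preceding theorem to $z = x^*$, use $f_i(x(t)) \ge f_i(x^*)$ to drop the min term, and read off the exponential bound on $\tfrac12\|x(t)-x^*\|^2$. The paper's proof is two lines and simply asserts ``Clearly, for any $t$, we have $f_i(x(t)) \ge f_i(x^*)$''; your derivation of this from the monotonicity of $t\mapsto f_i(x(t))$ (via \cref{lem:inequalityofCP}) together with $x(t)\to x^*$ and continuity is precisely the justification that step deserves.
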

\begin{proof}
    Clearly, for any $t$, we have $f_i(x(t)) \ge f_i(x^*)$. Therefore,  
    $$
    \|x(t) - x^*\|^2 \le \frac{2 e^{\frac{1}{\alpha_{\max}} t_0} \min_{i=1,\cdots,m} (f_i(x_0) - \inf f_i) + 2 e^{\frac{1}{\alpha_{\max}} t_0} R^2}{e^{\frac{1}{\alpha_{\max}} t}}.
    $$
    This completes the proof.
\end{proof}

\subsection{Convergence analysis in the non-convex case}

\begin{thm}  	Suppose \cref{assum:L-smoothing,assume:BoundedGradiet,assume:BoundedLevelset} hold, and let $x(t)$ be a trajectory solution of \cref{eq:CP} for $\eta > 0$. Then
$$
 \min_{s\in[0,t]} \|\mathbf{proj}_{C_\eta(x(s))}(0)\| \leq \frac{\sqrt{\min\limits_{i=1,\dots,m} (f_i(x_0) - \inf\limits_{x\in\mathbb{R}^n} f_i(x))}}{\sqrt{\eta} \sqrt{t}}
$$
\end{thm}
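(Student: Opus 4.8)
The plan is to turn the pointwise dissipation inequality of \cref{lem:inequalityofCP} into an integrated energy estimate and then extract the running minimum. The starting point is to record that, for the normalization underlying the non-convex analysis, $\alpha_i(x,t) = \nm{\nabla f_i(x)} + \eta$, so that $\alpha_i(x(s),s) \ge \eta > 0$ for every $s$ and every index $i$. Combined with the identity $\nm{\dot{x}(s)} = \nm{\proj_{C_\eta(x(s))}(0)}$, which is immediate from the dynamics in \cref{eq:CP}, \cref{lem:inequalityofCP} gives, for each $i$,
\[
\eta\,\nm{\dot{x}(s)}^2 \le \alpha_i(x(s),s)\,\nm{\dot{x}(s)}^2 \le -\frac{d}{ds} f_i(x(s)).
\]

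First I would integrate this over $[0,t]$. Since each $f_i$ is bounded below by \cref{assum:L-smoothing}, the fundamental theorem of calculus (valid because $x(\cdot)$ is absolutely continuous, hence $s \mapsto f_i(x(s))$ is absolutely continuous) yields
\[
\eta \int_0^t \nm{\dot{x}(s)}^2 \, ds \le f_i(x_0) - f_i(x(t)) \le f_i(x_0) - \inf_{x\in\mathbb{R}^n} f_i(x).
\]
The left-hand side is independent of $i$, whereas the bound holds for every $i$ simultaneously, so I would then take the minimum over $i$ on the right:
\[
\eta \int_0^t \nm{\dot{x}(s)}^2 \, ds \le \min_{i=1,\dots,m}\left( f_i(x_0) - \inf_{x\in\mathbb{R}^n} f_i(x) \right).
\]

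The final step is to replace the integral by the pointwise minimum over the interval via $t \cdot \min_{s\in[0,t]} \nm{\dot{x}(s)}^2 \le \int_0^t \nm{\dot{x}(s)}^2\,ds$. Rearranging and taking square roots, together with $\nm{\dot{x}(s)} = \nm{\proj_{C_\eta(x(s))}(0)}$, produces the claimed $O(1/\sqrt{t})$ bound. The one point requiring care — and which I regard as the main, though modest, obstacle — is justifying the lower bound $\alpha_{\min} \ge \eta$ by identifying the generic Lipschitz normalization of \cref{eq:MBGF} with the explicit choice $\alpha_i = \nm{\nabla f_i} + \eta$; this identification is precisely what lets the constant $\eta$ from the statement enter the denominator. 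Everything after that is a routine monotonicity-and-averaging argument, and the measurability of $\nm{\dot{x}}^2$ needed for the integration is guaranteed by the absolute continuity of the trajectory.
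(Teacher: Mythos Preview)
Your proposal is correct and follows essentially the same route as the paper: apply \cref{lem:inequalityofCP} with the lower bound $\alpha_i \ge \eta$, integrate over $[0,t]$, bound $f_i(x(t))$ below by $\inf f_i$, and pass to the running minimum via $t\cdot\min_{s\in[0,t]}\nm{\dot x(s)}^2 \le \int_0^t \nm{\dot x(s)}^2\,ds$. You are slightly more explicit than the paper in identifying the generic normalization of \cref{eq:MBGF} with the specific choice $\alpha_i = \nm{\nabla f_i} + \eta$ that furnishes the constant $\eta$, but the argument is otherwise identical.
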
 

\begin{proof} 
By \cref{lem:inequalityofCP}, we have
$$
\|\dot{x}(s)\|^2 \leq -\frac{1}{\eta}\frac{d}{ds}f_i(x(s)) \quad \text{for all } s\in[0,t].
$$
Integrating both sides over $[0,t]$ and using the definition of the equation yields
$$
t \min_{s\in[0,t]} \|\mathbf{proj}_{C_\eta(x(s))}(0)\|^2 \leq \int_0^t -\frac{d}{ds}f_i(x(s)) ds = \frac{1}{\eta}(f_i(x_0) - \inf_{x\in\mathbb{R}^n} f_i(x)).
$$
The conclusion follows after straightforward computation.
\end{proof}
\begin{thm} 
	Suppose \cref{assum:L-smoothing,assume:BoundedGradiet,assume:BoundedLevelset} hold, and let $x(t)$ be a trajectory solution of $\rm (CP)$ for $\eta = 0$. Assume there exists no $x^*\in\mathbb{R}^n$ such that $\max\limits_{i=1,\dots,m} \|\nabla f_i(x^*)\| = 0$ for all $i=1,\dots,m$. Then
$$
\min_{s\in[0,t]} \|\mathbf{proj}_{C_\eta(x(s))}(0)\| \leq \frac{\sqrt{\min\limits_{i=1,\dots,m} (f_i(x_0) - \inf\limits_{x\in\mathbb{R}^n} f_i(x))}}{\sqrt{M_1} \sqrt{t}}
$$
\end{thm}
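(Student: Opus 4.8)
The plan is to mirror the proof of the preceding theorem (the $\eta>0$ case), replacing the uniform lower bound $\eta$ on the normalizers $\alpha_i(x,s)=\|\nabla f_i(x)\|+\eta$ by a positive constant $M_1$ obtained from compactness. The whole argument rests on the per-index decrease estimate of \cref{lem:inequalityofCP}, which for $\eta=0$ (so that $\alpha_i(x,s)=\|\nabla f_i(x)\|$) reads
$$
\|\nabla f_i(x(s))\|\,\|\dot{x}(s)\|^2 \le -\frac{d}{ds}f_i(x(s)), \qquad i=1,\dots,m.
$$

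First I would confine the trajectory to a compact set. By the level-set monotonicity lemma, $x(s)\in\mathcal{L}(f,f(x_0))$ for all $s\ge t_0$, and this set is bounded by \cref{assume:BoundedLevelset} and closed by continuity of $f$, hence compact. The map $x\mapsto\max_{i}\|\nabla f_i(x)\|$ is continuous and, by the standing non-vanishing hypothesis, strictly positive on $\R^n$; therefore it attains a positive minimum on the compact level set. This produces a constant $M_1>0$ with $\max_{i}\|\nabla f_i(x(s))\|\ge M_1$ along the whole trajectory, the exact analogue of $\alpha_i\ge\eta$ in the previous theorem.

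Next I would integrate the decrease estimate. Fixing an index $i$ and using the lower bound $\|\nabla f_i(x(s))\|\ge M_1$, integration over $[0,t]$ telescopes the right-hand side:
$$
M_1\int_0^t\|\dot{x}(s)\|^2\,ds \le \int_0^t-\frac{d}{ds}f_i(x(s))\,ds = f_i(x_0)-f_i(x(t)) \le f_i(x_0)-\inf_{x\in\R^n}f_i(x).
$$
Bounding $\int_0^t\|\dot{x}(s)\|^2\,ds\ge t\,\min_{s\in[0,t]}\|\proj_{C_\eta(x(s))}(0)\|^2$ and then optimizing over $i$ on the right yields, after taking square roots, the stated $O(1/\sqrt{t})$ rate with $M_1$ in place of $\eta$.

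The main obstacle is the gap between what the hypothesis delivers and what the telescoping step needs: the assumption bounds $\max_{i}\|\nabla f_i\|$ below, but the index $j(s)$ realizing this maximum varies with $s$, so $-\frac{d}{ds}f_{j(s)}$ is not the derivative of a single $f_i$ and does not telescope cleanly. Two remedies are possible. Either one strengthens the hypothesis to $\min_{i}\|\nabla f_i(x)\|>0$ on the level set, so that a single fixed index carries the bound $\|\nabla f_i\|\ge M_1$ and the numerator $\min_{i}(f_i(x_0)-\inf f_i)$ is genuine; or one keeps the weaker $\max$-hypothesis at the cost of replacing that numerator by $\sum_{i}(f_i(x_0)-\inf f_i)$, using $-\frac{d}{ds}f_{j(s)}\le\sum_{i}\bigl(-\frac{d}{ds}f_i\bigr)$, which is legitimate since every $f_i$ is nonincreasing along the flow by \cref{lem:inequalityofCP}. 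I expect the clean statement to follow from the first remedy, and reconciling the constant $M_1$ with the precise non-vanishing hypothesis is the step that needs the most care.
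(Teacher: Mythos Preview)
Your plan mirrors the paper's proof almost exactly: the paper also just says ``following the same proof as above'' after introducing a constant $M_1$ coming from the level set, and then writes down the same integrated inequality with $M_1$ replacing $\eta$. The only structural difference is the definition of $M_1$: the paper sets
\[
M_1=\sup_{x\in\mathcal L(f,f(x_0))}\max_{i}\|\nabla f_i(x)\|,
\]
i.e.\ an \emph{upper} bound on the normalizers, whereas you (correctly, for the argument to make sense) take the infimum so that $M_1$ serves as a lower bound on $\alpha_i$ along the trajectory. With the paper's $\sup$ the step $\alpha_i\|\dot x\|^2\le -\frac{d}{ds}f_i \Rightarrow \|\dot x\|^2\le -\tfrac{1}{M_1}\frac{d}{ds}f_i$ is unjustified, so this is almost certainly a typo for $\inf$; your version is the one that actually supports the telescoping.

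The obstacle you flag---that the hypothesis only bounds $\max_i\|\nabla f_i\|$ from below, while the clean $\min_i$ numerator requires a lower bound on a \emph{fixed} $\|\nabla f_i\|$---is real and is simply not addressed in the paper's two-line proof. Your first remedy (read the non-vanishing hypothesis as $\min_i\|\nabla f_i\|>0$ on the level set) is exactly what is needed to recover the stated bound, and your second remedy gives a valid but weaker conclusion. So your analysis is more careful than the paper's own proof; go with the first remedy and note the slight strengthening of the hypothesis.
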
 

\begin{proof} 
By assumption,
$$
M_1 = \sup_{x\in\mathcal{L}(f,f(x_0))} \max_{i=1,\dots,m} \|\nabla f_i(x)\| > 0.
$$
Following the same proof as above, we obtain
$$
t \min_{s\in[0,t]} \|\mathbf{proj}_{C_\eta(x(s))}(0)\|^2 \leq \frac{1}{M_1}(f_i(x_0) - \inf_{x\in\mathbb{R}^n} f_i(x)),
$$
which establishes the conclusion.
\end{proof}
\section{Accelerated scaled gradient flow}
In this section, we consider the accelerated scaled gradient flow as follows:
\begin{equation}
\ddot x(t)+\frac{r}{t+\theta}\dot x(t)+{\proj}_{C_\alpha(x(t))}(-\ddot x(t))=0
\end{equation}
where  $C_\alpha(x(t)):={\bf conv}\left\{\frac{\nabla f_i(x(t))}{\alpha_i}\mid i=1,\cdots,m\right\}$. 
\begin{lem}
    Suppose that Assumptions 1, 2, and 3 hold. Let $x:[t_0,+\infty )\to \R^n$ be a solution of $\rm (CP)$, and for $i=1,2,\cdots,m$, define the energy function  
    $$
    \mathcal W_i(t)=f_i(x(t))+\frac{\alpha_i}2\|\dot x(t)\|^2.
    $$
    Then $\lim_{t\to \infty }\mathcal W_i(t)$ exists.  
\end{lem}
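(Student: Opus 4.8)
The plan is to prove the stronger fact that $\|\dot x(t)\|\to 0$, so that the kinetic term in $\mathcal W_i$ vanishes in the limit while the potential term $f_i(x(t))$ converges by monotonicity; the sum then converges. The argument splits into a "potential" part (easy, by descent) and a "velocity" part (the real work, via the set-valued geometry).

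First I would record that the potential term converges. By \cref{lem:inequalityofCP} together with $\alpha_i(x(t),t)\ge\alpha_{\min}$, we have $\frac{d}{dt}f_i(x(t))\le -\alpha_{\min}\|\dot x(t)\|^2\le 0$, so $t\mapsto f_i(x(t))$ is non-increasing; since each $f_i$ is bounded below by \cref{assum:L-smoothing}, the limit $f_i^\infty:=\lim_{t\to\infty}f_i(x(t))$ exists and is finite. Integrating the same inequality on $[t_0,\infty)$ gives the $L^2$ estimate
$$
\alpha_{\min}\int_{t_0}^{\infty}\|\dot x(t)\|^2\,dt\le f_i(x(t_0))-f_i^\infty<\infty ,
$$
so $g(t):=\|\dot x(t)\|^2$ is integrable on $[t_0,\infty)$.

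Next I would show $g$ is uniformly continuous (in fact Lipschitz), which is the technical heart of the argument, because $\dot x$ is only defined almost everywhere and $\ddot x$ need not exist, so $g$ cannot be differentiated directly. The route goes through the moving convex hull. Since $\|\dot x(t)\|=\|\proj_{C_\alpha(x(t),t)}(0)\|=d(0,C_\alpha(x(t),t))$ and each generating vector $\nabla f_i(x(t))/\alpha_i(x(t),t)$ lies in $C_\alpha(x(t),t)$, \cref{assume:BoundedGradiet} yields $\|\dot x(t)\|\le M/\alpha_{\min}$; hence $x(\cdot)$ is $(M/\alpha_{\min})$-Lipschitz. The distance functional $d(0,\cdot)$ is $1$-Lipschitz with respect to the Hausdorff distance by \cref{def:Hausdoff-distance}, so combining this with the Hausdorff-Lipschitz bound of \cref{lem:convexhull-hausdorff},
$$
\big|\,\|\dot x(t)\|-\|\dot x(s)\|\,\big|\le \haus\big(C_\alpha(x(t),t),C_\alpha(x(s),s)\big)\le K\sqrt{(M/\alpha_{\min})^2+1}\,|t-s| ,
$$
so $t\mapsto\|\dot x(t)\|$ is bounded and Lipschitz, and therefore $g=\|\dot x\|^2$ is Lipschitz as well.

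Finally I would invoke Barbalat's lemma: a nonnegative, uniformly continuous, integrable function on $[t_0,\infty)$ tends to $0$. Concretely, the Lipschitz continuity of $g$ together with $\int_{t_0}^\infty g<\infty$ rules out any sequence $t_n\to\infty$ with $g(t_n)\ge\varepsilon$, since each such point would force $g\ge\varepsilon/2$ on an interval of fixed positive length, contradicting integrability. Hence $\|\dot x(t)\|^2\to 0$, and
$$
\lim_{t\to\infty}\mathcal W_i(t)=\lim_{t\to\infty}f_i(x(t))+\frac{\alpha_i}{2}\lim_{t\to\infty}\|\dot x(t)\|^2=f_i^\infty ,
$$
which proves the claim. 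The main obstacle is precisely the uniform continuity of the velocity norm: this is where the nonsmooth, set-valued nature of \cref{eq:CP} forces us to replace the (unavailable) differentiation of $\|\dot x\|^2$ by the Hausdorff-continuity estimate of \cref{lem:convexhull-hausdorff}, after which a finite-integral/uniform-continuity argument closes the proof.
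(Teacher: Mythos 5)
You have proved the right conclusion for the wrong dynamical system, and that is a genuine gap rather than a stylistic difference. Although the statement literally says ``a solution of $\rm(CP)$'', this lemma sits in the section on the accelerated scaled gradient flow, and (as the constants $\alpha_i$ in place of $\alpha_i(x(t),t)$ and the kinetic term $\frac{\alpha_i}{2}\|\dot x(t)\|^2$ already signal) the trajectory it concerns solves the second-order system $\ddot x(t)+\frac{r}{t+\theta}\dot x(t)+\proj_{C_\alpha(x(t))}(-\ddot x(t))=0$, the paper reusing the label $\rm(CP)$ loosely. Every load-bearing step of your argument presupposes the first-order equation $\dot x(t)=-\proj_{C_\alpha(x(t),t)}(0)$: the identity $\|\dot x(t)\|=d\bigl(0,C_\alpha(x(t),t)\bigr)$, the descent inequality of \cref{lem:inequalityofCP}, and the Lipschitz bound on $t\mapsto\|\dot x(t)\|$ via \cref{lem:convexhull-hausdorff}. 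None of these survive in the inertial setting: there $\dot x(t)$ is not a projection, $f_i(x(t))$ need not be monotone (only the combined energy is --- this is exactly why the subsequent corollary needs $\dot x(t_0)=0$ to conclude $f_i(x(t))\le f_i(x_0)$), your $L^2$ estimate on $\|\dot x\|$ is unavailable, and $\|\dot x(t)\|\to 0$ is not known at this stage (integrability of $t\|\dot x(t)\|^2$ is obtained only later, for $r>3$, as a consequence of results that build on this lemma). So the proposal establishes a different statement, not the one the paper proves.

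For the intended system the paper's proof is a short monotonicity argument: differentiating gives $\frac{d}{dt}\mathcal W_i(t)=\langle\nabla f_i(x(t)),\dot x(t)\rangle+\alpha_i\langle\dot x(t),\ddot x(t)\rangle$, and testing the projection (variational) inequality for $\proj_{C_\alpha(x(t))}(-\ddot x(t))=-\ddot x(t)-\frac{r}{t+\theta}\dot x(t)$ against the point $\frac{\nabla f_i(x(t))}{\alpha_i}\in C_\alpha(x(t))$ yields
\begin{equation*}
\Bigl\langle \frac{\nabla f_i(x(t))}{\alpha_i}+\frac{r}{t+\theta}\dot x(t)+\ddot x(t),\,\dot x(t)\Bigr\rangle\le 0,
\end{equation*}
hence $\frac{d}{dt}\mathcal W_i(t)\le-\frac{\alpha_i r}{t+\theta}\|\dot x(t)\|^2\le 0$; since $f_i$ is bounded below, $\mathcal W_i$ is non-increasing and bounded below, so its limit exists --- no claim that $\|\dot x(t)\|\to 0$ is made or needed. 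To your credit, under the literal first-order reading of $\rm(CP)$ your internal logic is sound and even proves more than asked (the Barbalat route through the $L^2$ bound and the Hausdorff--Lipschitz continuity of $t\mapsto d(0,C_\alpha(x(t),t))$ correctly gives $\mathcal W_i(t)\to f_i^\infty$ with vanishing velocity); the error lies in identifying which dynamics the lemma is about, which the energy's kinetic term should have flagged.
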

\begin{proof}
    Differentiating gives  
    $$
    \frac{d}{dt}\mathcal W_i(t)=\Big< \nabla f_i(x(t)),\dot x(t)\Big>+\alpha_i\Big<\dot x(t),\ddot x(t)\Big>.
    $$
    By the projection theorem, we know  
    $$
    \Big<\frac{\nabla f_i(x(t))}{\alpha_i}+\frac{r }{t+\theta}\dot x(t)+\ddot x(t),\dot x(t)\Big>\le 0.
    $$
    Hence, $\frac{d}{dt}\mathcal W_i(t)\le 0$. Since $f_i$ is bounded from below, it follows that $\lim_{t\to \infty }\mathcal W_i(t)$ exists and is finite.
\end{proof}
\begin{coro}
    Let $x:[t_0,+\infty )\to \R^n$ be a solution of $\rm (CP)$ with $\dot x(t_0)=0$. For $i=1,\cdots,m$ and $t\in [t_0,+\infty )$, it holds that  
    $$
    f_i(x(t))\le f_i(x_0),
    $$
    i.e., $x(t)\in \mathcal L(f(x_0)) $ for all $t\ge t_0$. 
\end{coro}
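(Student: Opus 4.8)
The plan is to read the result directly off the preceding lemma, which already carries all the analytic weight: it shows that for each $i$ the energy function $\mathcal W_i(t) = f_i(x(t)) + \frac{\alpha_i}{2}\|\dot x(t)\|^2$ satisfies $\frac{d}{dt}\mathcal W_i(t) \le 0$ along the trajectory. The corollary is then simply the observation that starting from rest converts this dissipation into a pointwise bound on the objective values.

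First I would invoke the lemma to conclude $\mathcal W_i(t) \le \mathcal W_i(t_0)$ for every $t \ge t_0$ and every $i \in \{1,\dots,m\}$. Next I would evaluate the right-hand side using the initial condition $\dot x(t_0) = 0$, which annihilates the kinetic term and leaves $\mathcal W_i(t_0) = f_i(x(t_0)) = f_i(x_0)$. Finally, on the left-hand side I would drop the non-negative kinetic contribution $\frac{\alpha_i}{2}\|\dot x(t)\|^2 \ge 0$ (here I use $\alpha_i > 0$) to obtain the chain $f_i(x(t)) \le \mathcal W_i(t) \le \mathcal W_i(t_0) = f_i(x_0)$.

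Since this holds simultaneously for all indices $i$ and all $t \ge t_0$, the componentwise inequality $f(x(t)) \le f(x_0)$ follows, which is precisely the membership $x(t) \in \mathcal L(f, f(x_0))$ in the sense of \cref{def:levelset}. I expect no genuine obstacle here — the whole argument is the energy monotonicity of the lemma combined with the vanishing initial velocity. The only point that deserves a second glance is the sign of the coefficient $\alpha_i$, which must be positive for the kinetic term to be discardable; note also that dropping the hypothesis $\dot x(t_0) = 0$ would only yield the weaker estimate $f_i(x(t)) \le f_i(x_0) + \frac{\alpha_i}{2}\|\dot x(t_0)\|^2$.
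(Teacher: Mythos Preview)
Your proposal is correct and matches the paper's proof essentially verbatim: the paper writes the single chain $f_i(x_0)=\mathcal W_i(t_0)\ge \mathcal W_i(t)=f_i(x(t))+\frac{\alpha_i}{2}\|\dot x(t)\|^2\ge f_i(x(t))$, which is exactly your three steps (monotonicity from the lemma, evaluation at $t_0$ using $\dot x(t_0)=0$, and dropping the nonnegative kinetic term). Your additional remarks on the sign of $\alpha_i$ and on relaxing the initial velocity hypothesis are correct side observations not present in the paper.
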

\begin{proof}
    According to the preceding lemma, we have  
    $$
    f_i(x_0)=\mathcal W_i(t_0)\ge \mathcal W_i(t)=f_i(x(t))+\frac{a_i}2\|\dot x(t)\|^2\ge f_i(x(t)).
    $$
\end{proof}
\begin{lem}
    Let $\delta >0$, and let $\omega :[\delta ,+\infty )\to \R$ be a continuously differentiable positive function. Suppose  
    $$
    (t+\theta)\ddot \omega(t)+\alpha \dot \omega (t)\le g(t)
    $$
    for some $\alpha >1$ and almost every $t>\delta $, where $\theta \ge 0$ and $g\in L^1(\delta ,+\infty )$. Then $\lim _{t\to \infty }\omega (t)$ exists.
\end{lem}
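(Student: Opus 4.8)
The plan is to treat the inequality as a first-order linear differential inequality in $\dot\omega$ after multiplying by a suitable integrating factor, extract from it the integrability of the positive part $[\dot\omega]_+$, and then conclude by the standard ``monotone-plus-integrable'' decomposition together with the fact that $\omega$ is bounded below (being positive). This is a Lyapunov/AVD-type argument, and the only input beyond elementary calculus is Tonelli's theorem.

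First I would set $s=t+\theta$ and observe that the natural integrating factor is $(t+\theta)^\alpha$. Indeed, a direct differentiation gives
\[
\frac{d}{dt}\Big[(t+\theta)^\alpha \dot\omega(t)\Big]=(t+\theta)^{\alpha-1}\Big[(t+\theta)\ddot\omega(t)+\alpha\dot\omega(t)\Big]\le (t+\theta)^{\alpha-1}g(t),
\]
where the inequality uses the hypothesis together with $(t+\theta)^{\alpha-1}>0$. Integrating from $\delta$ to $t$ then yields an upper bound of the form
\[
(t+\theta)^\alpha\dot\omega(t)\le C+\int_\delta^t (\tau+\theta)^{\alpha-1}g(\tau)\,d\tau,\qquad C:=(\delta+\theta)^\alpha\dot\omega(\delta),
\]
so that $\dot\omega(t)\le (t+\theta)^{-\alpha}\big(|C|+\int_\delta^t(\tau+\theta)^{\alpha-1}g(\tau)\,d\tau\big)$, and the same bound holds for $[\dot\omega(t)]_+$.

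The key step is to integrate this bound over $[\delta,+\infty)$ and show the result is finite. The term coming from $|C|$ is controlled by $\int_\delta^\infty (t+\theta)^{-\alpha}\,dt<\infty$, which is exactly where the assumption $\alpha>1$ enters. For the remaining double integral I would invoke Tonelli's theorem (all integrands are nonnegative) to interchange the order of integration, using the elementary identity $\int_\tau^\infty (t+\theta)^{-\alpha}\,dt=\frac{1}{\alpha-1}(\tau+\theta)^{1-\alpha}$; the weights $(\tau+\theta)^{\alpha-1}$ and $(\tau+\theta)^{1-\alpha}$ then cancel, leaving $\frac{1}{\alpha-1}\int_\delta^\infty g(\tau)\,d\tau<\infty$ because $g\in L^1$. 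This establishes $[\dot\omega]_+\in L^1(\delta,+\infty)$. I expect this weight-cancellation via Tonelli to be the main obstacle, in the sense that it is the one nonroutine computation on which the whole argument hinges.

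Finally, to obtain the limit, I would set $\psi(t):=\omega(t)-\int_\delta^t[\dot\omega(\tau)]_+\,d\tau$. Then $\dot\psi=-[\dot\omega]_-\le 0$, so $\psi$ is nonincreasing; since $\omega>0$ and $\int_\delta^\infty[\dot\omega]_+<\infty$, $\psi$ is bounded below, hence $\lim_{t\to\infty}\psi(t)$ exists. As $\int_\delta^t[\dot\omega]_+$ converges to a finite limit, it follows that $\lim_{t\to\infty}\omega(t)=\lim_{t\to\infty}\psi(t)+\int_\delta^\infty[\dot\omega]_+$ exists as well. The role of each hypothesis is transparent in this scheme: $\alpha>1$ guarantees integrability of $(t+\theta)^{-\alpha}$, the assumption $g\in L^1$ feeds the right-hand side of the key estimate, and positivity of $\omega$ supplies the lower bound needed in the concluding step.
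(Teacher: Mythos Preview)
Your proof is correct and matches the paper's approach line for line: the integrating factor $(t+\theta)^{\alpha-1}$, the Fubini/Tonelli interchange yielding $[\dot\omega]_+\in L^1(\delta,\infty)$, and the nonincreasing auxiliary $\xi(t)=\omega(t)-\int_\delta^t[\dot\omega]_+$ bounded below by positivity of $\omega$ are exactly what the paper does. One cosmetic point: since $g$ is not assumed nonnegative, your claim ``all integrands are nonnegative'' needs the harmless preliminary step of replacing $g$ by $|g|$ (still in $L^1$, and the differential inequality $(t+\theta)\ddot\omega+\alpha\dot\omega\le |g|$ still holds), after which Tonelli applies cleanly.
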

\begin{proof}
    Multiplying both sides by $(t+\theta )^{\alpha -1}$, we obtain  
    $$
    \frac{d}{dt}\left[(t+\theta )^\alpha \dot \omega (t)\right]\le (t+\theta)^{\alpha -1}g(t).
    $$
    Integrating yields  
    $$
    [\dot \omega ]_+(t)\le \frac{(\delta+\theta)^\alpha |\dot \omega (\delta )|}{(t+\theta)^\alpha }+\frac{1}{(t+\theta)^\alpha }\int_{\delta }^t(s+\theta)^{\alpha -1}g(s)ds.
    $$
    Thus,  
    $$
    \int_{\delta }^\infty[\dot \omega]_+(t)dt\le \frac{(\delta+\theta)^\alpha |\dot \omega(\delta )|}{(\alpha -1)\delta ^{\alpha -1}}+\int_{\delta }^\infty \frac{1}{(t+\theta)^\alpha}\left(\int_{\delta }^t(s+\theta )^{\alpha -1}g(s)ds\right)dt.
    $$
    By Fubini’s theorem, we deduce  
    \begin{equation} 
    \begin{aligned} 
    \int_{\delta }^\infty \frac{1}{(t+\theta)^\alpha }\left(\int_{\delta }^t(s+\theta )^{\alpha-1}g(s)ds\right)dt&=\int_{\delta }^\infty \left(\int_{s}^\infty \frac{1}{(t+\theta)^\alpha }dt\right)(s+\theta)^{\alpha -1}g(s)ds\\&=\frac{1}{\alpha -1}\int_{\delta }^\infty g(s)ds.
    \end{aligned}
    \end{equation} 
    Hence,  
    $$
    \int_{\delta }^\infty [\dot \omega ]_+(t)dt< +\infty.
    $$
    Since the function  
    $$
    \xi(t)=\omega(t)-\int_{\delta }^t[\dot \omega ]_+(\tau)d\tau
    $$
    is non-increasing and bounded from below, it follows that  
    $$
    \lim_{t\to \infty }\omega (t)=\lim_{t\to \infty }\xi(t)+\int_{\delta}^{+\infty }[\dot \omega ]_+(\tau)d\tau
    $$
    exists.
\end{proof}
\begin{thm}
    Suppose that Assumptions 1, 2, and 3 hold. Let \(x(t)\) be a trajectory solution of \(\rm (CP)\) for \(r \ge 3\). Then  
    $$
    u_0(x(t)) = O\left(1/(t+\theta)^2\right),
    $$
    and in particular, when $r > 3$, we have $t\|\dot x(t)\|^2 \in L^1([t_0, +\infty))$.
\end{thm}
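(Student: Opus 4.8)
The plan is to transplant the Su--Boyd--Candès Lyapunov method to the scaled second--order flow of this section (the ``$\rm(CP)$'' in the statement is the accelerated scaled gradient flow, not the first--order Cauchy problem), handling the non--smooth merit function through the active--index lemma \cref{lem:diffequaldiffi} and carrying the scalings through the bounds $\alpha_{\min},\alpha_{\max}$. The algebraic cornerstone is the projection inequality forced by the dynamics: writing $p(s):=\proj_{C_\alpha(x(s))}(-\ddot x(s))$, the equation gives $-\ddot x-p=\frac{r}{s+\theta}\dot x$, so the obtuse--angle characterization of the projection yields $\langle\dot x(s),\xi-p(s)\rangle\le 0$ for every $\xi\in C_\alpha(x(s))$; taking $\xi=\nabla f_i(x)/\alpha_i$ gives $\langle\dot x,\nabla f_i(x)/\alpha_i\rangle\le\langle\dot x,p\rangle$ for each $i$. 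This is the accelerated analogue of \cref{lem:inequalityofCP}. A crucial structural difference from the convex (first--order) analysis must be noted at the outset: there $\tfrac{d}{ds}f_i(x(s))\le 0$ was used to discard a term, whereas along the inertial flow the objectives are \emph{not} monotone, so that term must instead be \emph{cancelled}, which dictates the form of the energy.

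Next I would fix a reference point $z$ and introduce the velocity variable $v(s):=(s+\theta)\dot x(s)+(r-1)(x(s)-z)$; using the flow one checks $\dot v=-(s+\theta)\,p$, which is what makes the coefficient $r-1$ special. I would then consider the per--objective energy $\mathcal E_z^i(s):=(s+\theta)^2\big(f_i(x(s))-f_i(z)\big)+\tfrac{\alpha_i}{2}\|v(s)\|^2$. Differentiating, the term $(s+\theta)^2\langle\nabla f_i(x),\dot x\rangle=\alpha_i(s+\theta)^2\langle\nabla f_i(x)/\alpha_i,\dot x\rangle$ is bounded, via the projection inequality, by $\alpha_i(s+\theta)^2\langle p,\dot x\rangle$, which cancels exactly the kinetic contribution $-\alpha_i(s+\theta)^2\langle\dot x,p\rangle$ — this is precisely why the kinetic weight is chosen to be $\alpha_i$. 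Convexity, $\langle x-z,\nabla f_j(x)\rangle\ge f_j(x)-f_j(z)$, turns the surviving cross term $-\alpha_i(r-1)(s+\theta)\langle x-z,p\rangle$ (with $p=\sum_j\theta_j\nabla f_j(x)/\alpha_j$) into $-\alpha_i(r-1)(s+\theta)\sum_j\theta_j\alpha_j^{-1}(f_j(x)-f_j(z))$. When the reference gaps are nonnegative this is $\le-\tfrac{\alpha_{\min}}{\alpha_{\max}}(r-1)(s+\theta)\min_j(f_j(x)-f_j(z))$, and combining with \cref{lem:diffequaldiffi} applied to $\mathcal E_z:=\min_i\mathcal E_z^i$ I expect a dissipation inequality of the form $\tfrac{d}{ds}\mathcal E_z(s)\le(3-r)(s+\theta)\min_i(f_i(x(s))-f_i(z))$, which is $\le 0$ for $r\ge 3$.

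From there the rate is routine. Integrating $\tfrac{d}{ds}\mathcal E_z\le 0$ and using $\dot x(t_0)=0$ gives $\mathcal E_z(t_0)=\min_i\big[(t_0+\theta)^2(f_i(x_0)-f_i(z))+\tfrac{\alpha_i}{2}(r-1)^2\|x_0-z\|^2\big]$, bounded uniformly over $z$ in the level set by \cref{assume:BoundedLevelset} together with the lower boundedness of the $f_i$; then $(t+\theta)^2\min_i(f_i(x(t))-f_i(z))\le\mathcal E_z(t)\le\mathcal E_z(t_0)$, and dividing by $(t+\theta)^2$ and taking the supremum over admissible $z$ yields $u_0(x(t))=O(1/(t+\theta)^2)$. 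For $r>3$ I would retain the strictly negative slack: integrating the dissipation inequality shows $\int_{t_0}^\infty(s+\theta)\min_i(f_i(x(s))-f_i(z))\,ds<\infty$, and pairing the flow with $(s+\theta)^2\dot x$ gives $\tfrac{d}{ds}\big[\tfrac12(s+\theta)^2\|\dot x\|^2\big]+(r-1)(s+\theta)\|\dot x\|^2=-(s+\theta)^2\langle p,\dot x\rangle$; reducing the right--hand side to the already--controlled gap integral (plus bounded boundary terms) yields $t\|\dot x(t)\|^2\in L^1([t_0,+\infty))$.

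The main obstacle lies entirely in the energy step, in two coupled places. First, for genuinely distinct weights $\alpha_i$ the exact cancellation forces the kinetic term to be scaled by $\alpha_i$, so the index minimizing $\mathcal E_z^i$ need not coincide with the index minimizing $f_i(x)-f_i(z)$; the clean conclusion $\tfrac{d}{ds}\mathcal E_z\le(3-r)(s+\theta)\min_i(\cdots)$ is immediate only when $\alpha_i\equiv\alpha$ (index--free kinetic, so \cref{lem:diffequaldiffi} acts on the potential alone), and the non--uniform case requires absorbing an $\alpha_{\min}/\alpha_{\max}$ factor while controlling this active--index discrepancy. Second, and more seriously, the sign needed for $\tfrac{d}{ds}\mathcal E_z\le 0$ requires $z\in\mathcal L(f,f(x(s)))$ for \emph{every} $s\le t$; since the objectives are non--monotone along the inertial flow, a point $z\in\mathcal L(f,f(x(t)))$ need not lie in all earlier level sets, so making the supremum defining $u_0(x(t))$ compatible with the admissible reference set — e.g.\ by working with $\bigcap_{s\in[t_0,t]}\mathcal L(f,f(x(s)))$ and the corollary $x(t)\in\mathcal L(f,f(x_0))$ valid under $\dot x(t_0)=0$ — is the point demanding the greatest care.
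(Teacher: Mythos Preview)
Your structural ingredients — the projection inequality $\langle \nabla f_i(x)/\alpha_i + \frac{r}{t+\theta}\dot x + \ddot x,\dot x\rangle\le 0$ and the active--index lemma — are exactly what the paper uses, but your choice of energy creates both obstacles you flag, and the paper's energy eliminates them rather than fights them. The paper takes
\[
\mathcal E_i(t)=\frac{1}{\alpha_i}(t+\theta)^2\big(f_i(x(t))-f_i(z)\big)+\tfrac12\|2(x(t)-z)+(t+\theta)\dot x(t)\|^2+\tfrac{\xi}{2}\|x(t)-z\|^2,\qquad \xi=2(r-3),
\]
i.e.\ it scales the \emph{potential} by $1/\alpha_i$ and keeps the kinetic part \emph{index--free}, and it fixes the coefficient on $(x-z)$ at $2$ (adding the $\xi$--correction) rather than at $r-1$.

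These two tweaks dissolve your difficulties simultaneously. First, since the kinetic and $\xi$--terms do not depend on $i$, the minimizing index for $\mathcal E(t)=\min_i\mathcal E_i(t)$ is the same as for $\min_i\frac{f_i(x)-f_i(z)}{\alpha_i}$, so \cref{lem:diffequaldiffi} acts cleanly. Second, pairing $x-z$ with $\frac{r}{t+\theta}\dot x+\ddot x=-p$ and using only convexity plus $\sum_j\theta_j a_j\ge\min_j a_j$ (no sign hypothesis) gives directly $\le -2(t+\theta)\min_j\frac{f_j(x)-f_j(z)}{\alpha_j}$; this cancels the $2(t+\theta)\frac{f_{i^*}(x)-f_{i^*}(z)}{\alpha_{i^*}}$ coming from the potential, and the leftover is the sign--definite term $-(r-3)(t+\theta)\|\dot x\|^2$. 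Thus $\tfrac{d}{dt}\mathcal E(t)+(r-3)(t+\theta)\|\dot x(t)\|^2\le 0$ holds for \emph{every} $z$, with no need to restrict $z$ to any level set along the trajectory. Your level--set worry, which is genuine for your energy because you must bound $\sum_j\frac{\theta_j}{\alpha_j}(f_j(x)-f_j(z))$ from below, simply does not arise.

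Your route for the $L^1$ claim is also unnecessarily indirect: with the paper's dissipation inequality, integrating over $[t_0,T]$ gives $(r-3)\int_{t_0}^T(t+\theta)\|\dot x\|^2\,dt\le\mathcal E(t_0)-\mathcal E(T)$ immediately, no extra pairing with $(s+\theta)^2\dot x$ required. In short, your plan is not wrong but stalls precisely where you say it does; the fix is to move the scaling from the kinetic to the potential and to use the ``coefficient $2$ plus $\xi\|x-z\|^2$'' form of the Su--Boyd--Cand\`es energy so that the residual dissipation lands on $\|\dot x\|^2$ rather than on a function gap.
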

\begin{proof}
    Define the Lyapunov function  
    $$
    \mathcal E_i(t):=\frac{1}{\alpha_i}(t+\theta)^2 (f_i(x(t))-f_i(z))+\frac{1}{2}\|2(x(t)-z)+(t+\theta)\dot x(t)\|^2+\frac{\xi}{2}\|x(t)-z\|^2.
    $$
    Differentiating yields  
    $$
    \begin{aligned}
    \frac{d}{dt}\mathcal E_i(t) &= 2(t+\theta)\frac{(f_i(x(t))-f_i(z))}{\alpha_i}+(t+\theta)^2\left\langle\frac{\nabla f_i(x(t))}{\alpha_i},\dot x(t)\right\rangle \\
    &\quad +\left\langle 2(x(t)-z)+(t+\theta)\dot x(t), 3\dot x(t)+(t+\theta)\ddot x(t)\right\rangle \\
    &\quad +\xi\left\langle \dot x(t), x(t)-z \right\rangle \\
    &= 2(t+\theta)\frac{(f_i(x(t))-f_i(z))}{\alpha_i}+(t+\theta)^2\left\langle\frac{\nabla f_i(x(t))}{\alpha_i},\dot x(t)\right\rangle \\
    &\quad +2\left\langle x(t)-z, (r - (r-3))\dot x(t)+(t+\theta)\ddot x(t)\right\rangle \\
    &\quad +(t+\theta)\left\langle \dot x(t), (r - (r-3))\dot x(t)+(t+\theta)\ddot x(t)\right\rangle \\
    &\quad +\xi\left\langle \dot x(t), x(t)-z \right\rangle \\
    &= 2(t+\theta)\frac{(f_i(x(t))-f_i(z))}{\alpha_i}+(t+\theta)^2\left\langle\frac{\nabla f_i(x(t))}{\alpha_i},\dot x(t)\right\rangle \\
    &\quad +2(t+\theta)\left\langle x(t)-z, \frac{r}{t+\theta}\dot x(t)+\ddot x(t)\right\rangle -2(r-3)\left\langle x(t)-z, \dot x(t)\right\rangle \\
    &\quad +(t+\theta)^2\left\langle \dot x(t), \frac{r}{t+\theta}\dot x(t)+\ddot x(t)\right\rangle - (r-3)(t+\theta)\|\dot x(t)\|^2 \\
    &\quad +\xi\left\langle \dot x(t), x(t)-z \right\rangle.
    \end{aligned}
    $$
    Note that  
    $$
    2(t+\theta)\left\langle x(t)-z, \frac{r}{t+\theta}\dot x(t)+\ddot x(t)\right\rangle \le -2(t+\theta)\min_{i=1,\cdots,m}\frac{(f_i(x(t))-f_i(z))}{\alpha_i},
    $$
    and  
    $$
    \left\langle \frac{\nabla f_i(x(t))}{\alpha_i} + \frac{r}{t+\theta}\dot x(t) + \ddot x(t), \dot x(t) \right\rangle \le 0.
    $$
    Thus,  
    $$
    \frac{d}{dt}\mathcal E(t) + (r-3)(t+\theta)\|\dot x(t)\|^2 \le 0.
    $$
    Finally, we obtain  
    $$
    \sup_{z \in \R^n} \min_{i=1,\cdots,m} \frac{f_i(x(t))-f_i(z)}{\alpha_i} \le \frac{\mathcal E(t_0)}{(t+\theta)^2} \le \frac{\min_{i=1,\cdots,m} \frac{f_i(x(t)) - \inf_{x \in \R^n} f_i}{\alpha_i} + R^2}{(t+\theta)^2}.
    $$
    When \(r > 3\), it follows that  
    $$
    \int_{t_0}^{+\infty} t \|\dot x(t)\|^2 \, dt < +\infty.
    $$
\end{proof}
\begin{thm}
    Suppose that Assumptions 1, 2, and 3 hold. Let \(x(t)\) be a trajectory solution of \(\rm (CP)\). Then \(x(t)\) converges to a weak Pareto optimum of \(\rm (MOP)\). 
\end{thm}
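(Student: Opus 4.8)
The plan is to invoke Opial's Lemma (\cref{lem:opial}) with the target set
$$
S := \{\, z \in \R^n \mid f_i(z) \le f_i^\infty \ \text{for all } i=1,\dots,m \,\},
$$
where $f_i^\infty := \lim_{t\to\infty} f_i(x(t))$, exactly as in the first-order convex case. Before the two Opial conditions can be checked, I first set up the needed preliminaries. Boundedness of $x(t)$ follows from the energy lemma of this section: since $\mathcal{W}_i(t)=f_i(x(t))+\tfrac{\alpha_i}{2}\nm{\dot x(t)}^2$ is non-increasing, $f_i(x(t))\le \mathcal{W}_i(t_0)$ for every $t$, so $x(t)$ stays in a level set bounded by \cref{assume:BoundedLevelset}; the same inequality together with the lower boundedness of $f_i$ shows $\nm{\dot x(t)}$ is bounded. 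Next I argue $\nm{\dot x(t)}\to 0$: the projection term $p(t):=\proj_{C_\alpha(x(t))}(-\ddot x(t))\in C_\alpha(x(t))$ has norm at most $M/\alpha_{\min}$ by \cref{assume:BoundedGradiet}, so $\ddot x$ is bounded and $\tfrac{d}{dt}\nm{\dot x}^2$ is bounded; combined with $\nm{\dot x}^2\in L^1$ (a consequence of $\int_{t_0}^\infty(t+\theta)\nm{\dot x}^2\,dt<\infty$, which the preceding theorem gives for $r>3$), a Barbalat-type argument yields $\nm{\dot x(t)}\to 0$. Hence $f_i(x(t))=\mathcal{W}_i(t)-\tfrac{\alpha_i}{2}\nm{\dot x(t)}^2$ converges, so $f_i^\infty$ is well defined and $S$ is closed, convex, and nonempty.

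For Opial condition (i), let $x^\infty=\lim_k x(t_k)$ be any limit point. Continuity of $f_i$ gives $f_i(x^\infty)=\lim_k f_i(x(t_k))=f_i^\infty$, so $x^\infty\in S$. Moreover, the preceding convergence-rate theorem gives $u_0(x(t))=O(1/(t+\theta)^2)\to 0$, and the lower semicontinuity of $u_0$ (\cref{thm:weakpareto}) yields $u_0(x^\infty)\le \liminf_t u_0(x(t))=0$; thus every limit point is a weak Pareto point of \cref{eq:MOP}.

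The crux is Opial condition (ii): for each fixed $z\in S$, the limit $\lim_{t\to\infty}\nm{x(t)-z}$ exists. I set $\omega(t):=\tfrac12\nm{x(t)-z}^2$, so $\dot\omega=\langle x-z,\dot x\rangle$ and $\ddot\omega=\nm{\dot x}^2+\langle x-z,\ddot x\rangle$. Using $\ddot x+\tfrac{r}{t+\theta}\dot x=-p(t)$ I compute
$$
(t+\theta)\ddot\omega(t)+r\dot\omega(t)=(t+\theta)\nm{\dot x(t)}^2-(t+\theta)\langle x(t)-z,\,p(t)\rangle .
$$
Writing $p(t)=\sum_i \tfrac{\lambda_i}{\alpha_i}\nabla f_i(x(t))$ with $\lambda\in\Delta^m$ and using convexity $\langle x-z,\nabla f_i(x)\rangle\ge f_i(x)-f_i(z)$, the inner-product term is bounded below by $\sum_i\tfrac{\lambda_i}{\alpha_i}(f_i(x)-f_i(z))$. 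Splitting $f_i(x)-f_i(z)=(f_i(x)-f_i^\infty)+(f_i^\infty-f_i(z))$, the second piece is nonnegative since $z\in S$, while the monotone convergence $\mathcal{W}_i(t)\ge f_i^\infty$ gives $f_i(x)-f_i^\infty\ge-\tfrac{\alpha_i}{2}\nm{\dot x}^2$. These two facts convert the display into the one-sided inequality
$$
(t+\theta)\ddot\omega(t)+r\dot\omega(t)\le \tfrac{3}{2}(t+\theta)\nm{\dot x(t)}^2=:g(t),
$$
with $g\in L^1(t_0,+\infty)$ precisely because $\int_{t_0}^\infty(t+\theta)\nm{\dot x}^2\,dt<\infty$. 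Applying the differential-inequality lemma of this section (with $\alpha=r>1$) then shows $\lim_{t\to\infty}\omega(t)$, and hence $\lim_{t\to\infty}\nm{x(t)-z}$, exists.

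With both Opial conditions verified, \cref{lem:opial} guarantees that $x(t)$ converges to some $x^\infty\in S$, which by condition (i) is a weak Pareto point of \cref{eq:MOP}. I expect the main obstacle to be the sign and integrability bookkeeping in condition (ii): unlike the first-order flow, $f_i(x(t))$ is not monotone, so $f_i(x)-f_i^\infty$ can be negative, and the argument hinges on absorbing that term into $(t+\theta)\nm{\dot x}^2$ via the energy bound $\mathcal{W}_i(t)\ge f_i^\infty$ together with the estimate $\int(t+\theta)\nm{\dot x}^2<\infty$, which is exactly where $r>3$ is needed. A minor technical point is the positivity hypothesis of the differential-inequality lemma, handled by noting $\omega\ge 0$ and perturbing by a constant if required.
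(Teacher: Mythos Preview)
Your proof is correct and follows essentially the same route as the paper: define the set $S$, verify Opial condition (i) on accumulation points, derive the inequality $(t+\theta)\ddot\omega + r\dot\omega \le \tfrac{3}{2}(t+\theta)\nm{\dot x}^2$ via convexity together with the energy bound $\mathcal{W}_i(t)\ge f_i^\infty$, and apply the differential-inequality lemma of this section to obtain Opial condition (ii). In fact your write-up is more careful than the paper's, since you explicitly justify the existence of $f_i^\infty$ (via a Barbalat argument giving $\nm{\dot x(t)}\to 0$), note that the $L^1$ hypothesis on $g$ requires $r>3$, and verify via $u_0$ and \cref{thm:weakpareto} that the limit point is a weak Pareto optimum---details the paper leaves implicit.
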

\begin{proof}
    Define the set  
    $$
    S := \{z \in \R^n : f_i(z) \le f_i^\infty\},
    $$
    where \(f_i^\infty = \lim_{t \to \infty} f_i(x(t))\). Since \(x(t)\) is bounded, there exists an accumulation point \(x^\infty \in \R^n\) and a sequence \(\{t_k\}\) such that \(x(t_k) \to x^\infty\) as \(k \to \infty\). By the lower semicontinuity of the objective functions,  
    $$
    f_i(x^\infty) \le \liminf_{k \to \infty} f_i(x(t_k)) = \lim_{k \to \infty} f_i(x(t_k)) = f_i^\infty.
    $$
    Thus, every accumulation point of \(x(t)\) lies in the nonempty set \(S\). Let \(z \in S\) and define \(h_z(t) = \frac{1}{2} \|x(t) - z\|^2\). Then  
    $$
    \ddot h_z(t) + \frac{\alpha}{t+\theta} \dot h_z(t) = \left\langle x(t) - z, \ddot x(t) + \frac{\alpha}{t+\theta} \dot x(t) \right\rangle + \|\dot x(t)\|^2.
    $$
    Note that  
    $$
    f_i(x(t)) + \frac{\alpha_i}{2} \|\dot x(t)\|^2 = \mathcal W_i(t) \ge f_i^\infty.
    $$
    For any \(z \in S\), we have  
    $$
    f_i^\infty \ge f_i(z) \ge f_i(x(t)) + \left\langle \nabla f_i(x(t)), z - x(t) \right\rangle,
    $$
    which implies  
    $$
    \left\langle \frac{\nabla f_i(x(t))}{\alpha_i}, z - x(t) \right\rangle \le \frac{1}{\alpha_i}(f_i(z) - f_i(x(t))) \le \frac{1}{2} \|\dot x(t)\|^2.
    $$
    Therefore,  
    $$
    \left\langle \ddot x(t) + \frac{\alpha}{t+\theta} \dot x(t), x(t) - z \right\rangle \le \frac{1}{2} \|\dot x(t)\|^2.
    $$
    Thus,  
    $$
    (t+\theta) \ddot h_z(t) + \alpha \dot h_z(t) \le \frac{3}{2} t \|\dot x(t)\|^2.
    $$
    Hence, \(\lim_{t \to \infty} \|x(t) - z\|\) converges, which completes the proof of convergence to a weakly Pareto optimal solution.
\end{proof}

\section{Relationship with Discretization Algorithms}

Algorithm  
\begin{equation} 
x_{k+1}=x_k -s_k\textbf{proj}_{C_\alpha (x_k)}(0)\tag{1}
\end{equation} 
where $0<s_{\min}\le s_k\le \min_{i=1,\cdots,m}\frac{2\alpha_i(x_k,k)}{L_i} $. In general, choosing $s_k =\frac{2\alpha_{\min}}{L_{\max}}$ suffices.  

\begin{thm}
Suppose Assumptions 1, 2, and 3 hold. Let $\{x_k\}$ be the sequence of iterates generated by the iterative scheme (1) with initial point $x_0$ and step sizes satisfying $0<s_{\min}\le s_k \le \min _{i=1,\cdots,m}\frac{2\alpha _i(x_k,k)}{L_i}$. Then, $f_i(x_{k+1})\le f_i(x_k)$ for all $i=1,\cdots,m$, and $u_0(x_{k})=O(1/k)$.  
\end{thm}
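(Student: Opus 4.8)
The plan is to discretize the Lyapunov argument of \cref{thm:convergencerate}, letting the descent lemma play the role that \cref{lem:inequalityofCP} played in continuous time. Write $d_k:=\proj_{C_\alpha(x_k)}(0)=\sum_{j}\theta_j^k\,\nabla f_j(x_k)/\alpha_j(x_k,k)$, so that $x_{k+1}=x_k-s_kd_k$. For the monotonicity claim I would first invoke the descent lemma from \cref{assum:L-smoothing}, namely $f_i(x_{k+1})\le f_i(x_k)-s_k\langle\nabla f_i(x_k),d_k\rangle+\tfrac{L_is_k^2}{2}\|d_k\|^2$. Because $\nabla f_i(x_k)/\alpha_i(x_k,k)\in C_\alpha(x_k)$, the variational inequality characterizing the minimal-norm element $d_k$ gives $\langle\nabla f_i(x_k),d_k\rangle\ge\alpha_i(x_k,k)\|d_k\|^2$; substituting yields $f_i(x_{k+1})-f_i(x_k)\le-s_k\|d_k\|^2\bigl(\alpha_i(x_k,k)-L_is_k/2\bigr)\le0$, the last inequality being exactly the content of the hypothesis $s_k\le 2\alpha_i(x_k,k)/L_i$. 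This is the discrete analogue of \cref{lem:inequalityofCP} and settles $f_i(x_{k+1})\le f_i(x_k)$; it also nests the level sets, so a point $z$ in the final level set satisfies $f_j(z)\le f_j(x_k)$ for all earlier $k$ and all $j$, and by \cref{assume:BoundedLevelset} such $z$ lie in a fixed ball of radius $R$.

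For the $O(1/k)$ estimate I would fix such a $z$ and define the discrete energy $\mathcal E_k:=\tfrac{ks}{\alpha_{\max}}\min_i(f_i(x_k)-f_i(z))+\tfrac12\|x_k-z\|^2$, taking the constant step $s=2\alpha_{\min}/L_{\max}$ for concreteness. Expanding $\tfrac12\|x_{k+1}-z\|^2$ and using convexity of the $f_j$ through $\langle d_k,x_k-z\rangle\ge\tfrac1{\alpha_{\max}}\min_j(f_j(x_k)-f_j(z))$ reproduces the continuous computation, the one difference being an extra discretization term $\tfrac{s^2}{2}\|d_k\|^2$. Evaluating the energy at the index attaining the running minimum and using that $\min_i(f_i(x_{k+1})-f_i(z))\le\min_i(f_i(x_k)-f_i(z))$ (the discrete surrogate for \cref{lem:diffequaldiffi}), I expect to reach the one-step inequality $\mathcal E_{k+1}\le\mathcal E_k+\tfrac{s^2}{2}\|d_k\|^2$.

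The main obstacle is the accumulated discretization error $\sum_k\tfrac{s^2}{2}\|d_k\|^2$: the crude bound $\|d_k\|\le M/\alpha_{\min}$ from \cref{assume:BoundedGradiet} only yields $O(k)$ growth, which would destroy the rate. The resolution is to pay for it with the per-objective decrease already established: the step-size bound gives $s\|d_k\|^2(\alpha_i-L_is/2)\le f_i(x_k)-f_i(x_{k+1})$, so telescoping a single objective's monotone decrease, $\sum_k\bigl(f_i(x_k)-f_i(x_{k+1})\bigr)\le f_i(x_0)-\inf f_i<\infty$, bounds $\sum_k s\|d_k\|^2$ and hence the total error by a constant $C$ independent of $k$. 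Telescoping the one-step inequality then gives $\tfrac{ks}{\alpha_{\max}}\min_i(f_i(x_k)-f_i(z))\le\mathcal E_0+C\le R^2+C$; taking the supremum over admissible $z$ and invoking \cref{thm:weakpareto} produces $u_0(x_k)\le\alpha_{\max}(R^2+C)/(ks)=O(1/k)$. The delicate point I would watch is that this absorption needs the step strictly below the descent threshold for at least one objective (which $s=2\alpha_{\min}/L_{\max}$ ensures unless every objective is simultaneously extremal in both $\alpha_i$ and $L_i$); in the fully degenerate case one must instead cancel the $\tfrac{s^2}{2}\|d_k\|^2$ term exactly by tightening the energy, at the price of halving the admissible step.
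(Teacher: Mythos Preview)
Your monotonicity argument is the same as the paper's (descent lemma plus the variational inequality for the minimal-norm element), so nothing to add there.

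For the $O(1/k)$ rate the two routes diverge. The paper does not stop at the convexity inequality at $x_k$; it instead combines convexity with the descent lemma into the three-point estimate $f_i(x_{k+1})-f_i(z)\le\langle\nabla f_i(x_k),x_{k+1}-z\rangle+\tfrac{L_i}{2}\|x_{k+1}-x_k\|^2$, weights it by $\lambda_i^k/\alpha_i(x_k,k)$, and sums over $i$. Because $\sum_i\lambda_i^k\nabla f_i(x_k)/\alpha_i=(x_k-x_{k+1})/s_k$, the inner product term becomes $\tfrac{1}{s_k}\langle x_k-x_{k+1},x_{k+1}-z\rangle$ and, after the polarization identity, contributes a \emph{negative} multiple of $\|x_{k+1}-x_k\|^2$; the step-size hypothesis $s_k\le 2\alpha_i/L_i$ then makes the total coefficient of $\|x_{k+1}-x_k\|^2$ nonpositive. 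This yields the clean one-step inequality $\min_i(f_i(x_{k+1})-f_i(z))\le\tfrac{\alpha_{\max}}{2s_{\min}}\bigl(\|x_k-z\|^2-\|x_{k+1}-z\|^2\bigr)$, and the energy $E(k)=k\min_i(f_i(x_k)-f_i(z))+\tfrac{\alpha_{\max}}{2s_{\min}}\|x_k-z\|^2$ is exactly nonincreasing. No residual $\|d_k\|^2$ term ever appears, so there is no summability argument, no strict-inequality caveat, and variable step sizes $s_k\in[s_{\min},\min_i 2\alpha_i/L_i]$ are handled with no extra work. Your route---using convexity only at $x_k$ and then absorbing the leftover $\tfrac{s^2}{2}\|d_k\|^2$ by telescoping a single objective's decrease---is a faithful discretization of the continuous proof and does succeed, but the exact cancellation in the paper is the sharper device: it removes both the absorption step and the degenerate-step edge case you flag at the end.
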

\begin{proof}
According to the definition of the iterative scheme and using the projection theorem, we obtain  
$$
\left<\frac{\nabla f_i(x_k)}{\alpha_i(x_k,k)}+\frac{(x_{k+1}-x_k)}{s_k},x_{k+1}-x_k\right>\le 0,
$$
so $\left< \frac{\nabla f_i(x_k)}{\alpha_i(x_k,k)},x_{k+1}-x_k\right>\le -\frac{1}{s_k}\|x_{k+1}-x_k\|^2$. Further, we have  
\begin{equation} 
\begin{aligned}
f_i(x_{k+1})-f_i(x_k)&\le \alpha_i(x_k,k)\left(\left<\frac{\nabla f_i(x_k)}{\alpha_i(x_k,k)},x_{k+1}-x_k\right>+\frac{L_i}{2\alpha_i(x_k,k)}\|x_{k+1}-x_k\|^2\right)\\
&\le \frac{\alpha_i(x_k,k)}{2s_k}\left(\frac{L_is_k}{\alpha_i(x_k,k)}-2\right)\|x_{k+1}-x_k\|^2\le 0.
\end{aligned}\tag{2}
\end{equation}
This proves $f_i(x_{k+1})\le f_i(x_k)$. Next, we show that $u_0(x_k )=O(1/k)$.

Compute  
$$
f_i(x_{k+1})-f_i(x_k)\le \left\langle \nabla f_i(x_k), x_{k+1}-x_k \right\rangle + \frac{L_i}{2} \|x_{k+1}-x_k\|^2 \\  
f_i(x_k)-f_i(z)\le \left\langle \nabla f_i(x_k), x_k-z \right\rangle  
$$
Thus, we obtain  
$$
f_i(x_{k+1})-f_i(z)\le \left\langle \nabla f_i(x_k), x_{k+1}-z \right\rangle + \frac{L_i}{2} \|x_{k+1}-x_k\|^2  
$$

Let $\lambda_k := (\lambda_1^k, \cdots, \lambda_m^k)$ such that $x_{k+1}-x_k = -s_k \textbf{proj}_{C(x_k)}(0) = -s_k \sum_{i=1}^m \lambda_i^k \nabla f_i(x_k)$, and take $z \in \mathcal{L}(f, f(x_{k+1}))$, then we can obtain  
\begin{equation} 
\begin{aligned}  
\sum_{i=1}^m \frac{\lambda_i^k}{\alpha_i(x_k,k)}(f_i(x_{k+1})-f_i(z)) &\le \sum_{i=1}^m \frac{\lambda_i^k}{\alpha_i(x_k,k)} \left\langle \nabla f_i(x_k), x_{k+1}-z \right\rangle + \sum_{i=1}^m \frac{\lambda_i^k}{\alpha_i(x_k,k)} \frac{L_i}{2} \|x_{k+1}-x_k\|^2 \\  
&\le \left\langle \sum_{i=1}^m \frac{\lambda_i^k}{\alpha_i(x_k,k)} \nabla f_i(x_k), x_{k+1}-z \right\rangle + \sum_{i=1}^m \frac{\lambda_i^k}{\alpha_i(x_k,k)} \frac{L_i}{2} \|x_{k+1}-x_k\|^2 \\  
&= \frac{1}{s_k} \left\langle x_k - x_{k+1}, x_{k+1}-z \right\rangle + \sum_{i=1}^m \frac{\lambda_i^k}{\alpha_i(x_k,k)} \frac{L_i}{2} \|x_{k+1}-x_k\|^2 \\  
&= -\frac{1}{s_k} \|x_{k+1}-x_k\|^2 + \sum_{i=1}^m \frac{\lambda_i^k}{\alpha_i(x_k,k)} \frac{L_i}{2} \|x_{k+1}-x_k\|^2 + \frac{1}{s_k} \left\langle x_k - x_{k+1}, x_k - z \right\rangle \\  
&= \left( \sum_{i=1}^m \frac{\lambda_i^k}{\alpha_i(x_k,k)} \frac{L_i}{2} - \frac{1}{s_k} \right) \|x_{k+1}-x_k\|^2 + \frac{1}{2s_k} \left[ \|x_k - z\|^2 - \|x_{k+1} - z\|^2 \right] \\  
&\le \frac{1}{2s_k} \left[ \|x_k - z\|^2 - \|x_{k+1} - z\|^2 \right]  
\end{aligned}  
\end{equation} 

Since $f_i(x_{k+1}) \ge f_i(z)$, the right-hand side of the last inequality is greater than or equal to $0$. Based on this, we define $\widetilde{\lambda}_i^k = \frac{\frac{\lambda_i^k}{\alpha_i(x_k,k)}}{\sum_{i=1}^m \frac{\lambda_i^k}{\alpha_i(x_k,k)}}$, and it is easy to obtain  
\begin{equation} 
\begin{aligned}  
\min_{i=1,\cdots,m} (f_i(x_{k+1}) - f_i(z)) &\le \sum_{i=1}^m \widetilde{\lambda}_i^k (f_i(x_{k+1}) - f_i(z)) \\  
&\le \frac{1}{2s_k \cdot \sum_{i=1}^m \frac{\lambda_i^k}{\alpha_i(x_k,k)}} \left[ \|x_k - z\|^2 - \|x_{k+1} - z\|^2 \right] \\  
&\le \frac{\alpha_{\max}}{2s_{\min}} \left[ \|x_k - z\|^2 - \|x_{k+1} - z\|^2 \right]  
\end{aligned}  
\tag{3}  
\end{equation} 

Define an auxiliary function  
$$
E(k) = k \min_{i=1,\cdots,m} \left( f_i(x_k) - f_i(z) \right) + \frac{\alpha_{\max}}{2s_{\min}} \|x_k - z\|^2  
$$
Then  
$$
\begin{aligned}  
E(k+1) - E(k) &= k \left( \min_{i=1,\cdots,m} (f_i(x_{k+1}) - f_i(z)) - \min_{i=1,\cdots,m} (f_i(x_k) - f_i(z)) \right) \\  
&\quad + \min_{i=1,\cdots,m} (f_i(x_{k+1}) - f_i(z)) + \frac{\alpha_{\max}}{2s_{\min}} \|x_{k+1} - z\|^2 - \frac{\alpha_{\max}}{2s_{{\min}}} \|x_k - z\|^2 \\  
&\overset{(*)}{\le} k \max_{i=1,\cdots,m} (f_i(x_{k+1}) - f_i(x_k)) + \frac{\alpha_{\max}}{2s_{{\min}}} \left[ \|x_k - z\|^2 - \|x_{k+1} - z\|^2 \right] \\  
&\quad + \frac{\alpha_{\max}}{2s_{\min}} \left[ \|x_{k+1} - z\|^2 - \|x_k - z\|^2 \right] \\  
&\overset{(**)}{\le} 0  
\end{aligned}  
$$
where $(*)$ holds based on (3), and $(**)$ holds based on the non-increasing property of $\{f_i(x_k)\}$.  

According to the above, for any $p=0,1,\cdots,k$, we have  
$$
E(p+1) - E(p) \le 0  
$$
Summing from $p=0$ to $p=k-1$, we obtain  
$$
k \min_{i=1,\cdots,m} \left( f_i(x_k) - f_i(z) \right) \le E(k) \le E(0) = \frac{\alpha_{\max}}{s_{\min}} \|x_0 - z\|^2  
$$
Then, based on Assumption 3, we get  
$$
u_0(x_k) \le \frac{\alpha_{\max} R^2}{s_{\min} k}
$$
\end{proof}
			
		\bibliographystyle{abbrv}

	\end{document}